\numberwithin{equation}{section}
\theoremstyle{definition}
\newtheorem{definition}{Definition}[section]
\theoremstyle{remark}
\newtheorem{remark}[definition]{Remark}
\theoremstyle{plain}
\newtheorem{theorem}[definition]{Theorem}
\newtheorem{result}[definition]{Result}
\newtheorem{lemma}[definition]{Lemma}
\newtheorem{proposition}[definition]{Proposition}
\newtheorem{corollary}[definition]{Corollary}
\newtheorem{fact}[definition]{Fact}
\newcommand{\eps}{\varepsilon}
\newcommand{\zt}{\zeta}
\newcommand{\zahl}{\mathbb{Z}}  
\newcommand{\nat}{\mathbb{N}}
\newcommand{\excep}{\mathcal{E}}
\newcommand{\Lamb}{\Lambda}
\newcommand{\rep}{\mathcal{R}}
\newcommand{\path}{\mathscr{P}}
\newcommand{\Pth}{\boldsymbol{\mathcal{Z}}}
\newcommand{\Ipth}{\boldsymbol{\mathcal{W}}}
\newcommand{\Opth}{\boldsymbol{\mathcal{X}}}
\newcommand{\bdy}{\partial}
\newcommand{\dom}{D}
\newcommand{\OM}{\Omega}
\newcommand{\Dsc}{\overline{\Delta}}
\newcommand{\norm}{\mathcal{N}}
\newcommand{\bran}{\mathcal{D}}
\newcommand{\smoo}{\mathcal{C}}
\newcommand{\hol}{\mathcal{O}}
\newcommand{\prjc}{{\sf proj}}
\newcommand{\Ptmap}{\Phi_{\!\boldsymbol{\mathcal{Z}}}}
\newcommand{\er}{{\sf Re}}
\newcommand{\mi}{{\sf Im}}
\newcommand{\bcdot}{\boldsymbol{\cdot}}
\newcommand{\lrarw}{\longrightarrow}
\newcommand{\corr}{\varGamma}
\newcommand{\acorr}{{}^\dagger\!\varGamma}
\newcommand\aGa[1]{{}^\dagger\!\Gamma_{{#1}}}
\newcommand{\eff}{{}^\dagger\!F}
\newcommand\trp[1]{{}^\dagger\!{#1}}
\newcommand{\FS}{\omega_{FS}}
\newcommand{\weakST}{\xrightarrow{\text{weak${}^{\boldsymbol{*}}$}}}
\newcommand\Add[1]{\sum_{{#1}}\nolimits^{\prime}}
\newcommand{\IVar}{\Gamma^\bullet}
\newcommand{\zz}{z^\bullet}
\newcommand\mul[1]{\boldsymbol{{#1}}}
\newcommand\interR[2]{[{#1}\,.\,.\,{#2}]}
\newcommand{\Fml}{\mathscr{F}}
\newcommand{\PerrF}{\mathbb{A}}
\newcommand\shortInt[2]{\int_{\raisebox{1pt}{$\scriptstyle {{#1}}$}}\!{#2}}
\newcommand{\inB}{\mathscr{I\!\!B}}
\newcommand{\spU}{{\sf Spec}_{\sf U}}
\newcommand{\opColl}{\mathfrak{B}}
\newcommand{\filJ}{\mathcal{K}}
\newcommand{\Lam}{\boldsymbol{\Lambda}}
\newcommand{\CC}{\mathbb{C}^2}
\newcommand{\cplx}{\mathbb{C}} 
\newcommand{\rea}{\mathbb{R}}
\newcommand{\pro}{\mathbb{P}^1}
\newcommand{\sro}{X}
\begin{document}

\title[Dynamics of correspondences of $\pro$]{The dynamics of holomorphic correspondences \\
of $\boldsymbol{\pro}$: invariant measures and the normality set}

\author{Gautam Bharali}
\address{Department of Mathematics, Indian Institute of Science, Bangalore 560012, India}
\email{bharali@math.iisc.ernet.in}

\author{Shrihari Sridharan}
\address{Chennai Mathematical Institute, Chennai 603103, India}
\email{shrihari@cmi.ac.in}

\thanks{The first author is supported in part by a UGC Centre for Advanced Study grant}

\keywords{Holomorphic correspondence, invariant measure, normality set}
\subjclass[2010]{Primary 37F05, 37F10; Secondary 30G30}

\begin{abstract} 
 This paper is motivated by Brolin's theorem. The phenomenon we wish to demonstrate is
 as follows: if $F$ is a holomorphic correspondence on $\pro$, then (under certain conditions)
 $F$ admits a measure $\mu_F$ such that, for any point $z$ drawn from
 a ``large'' open subset of $\pro$, $\mu_F$ is the weak${}^{\boldsymbol{*}}$-limit
 of the normalised sums of point masses carried by the pre-images of $z$ under the
 iterates of $F$. Let $\eff$ denote the transpose of $F$. Under the condition
 $d_{top}(F) > d_{top}(\eff)$, where $d_{top}$ denotes the topological degree, the
 above phenomenon was established by Dinh and Sibony. We show that
 the support of this $\mu_F$ is disjoint from the normality set of $F$. There are many interesting
 correspondences on $\pro$ for which $d_{top}(F) \leq d_{top}(\eff)$. Examples are the
 correspondences introduced by Bullett and collaborators. When $d_{top}(F) \leq d_{top}(\eff)$,
 equidistribution {\em cannot} be expected to the full extent of Brolin's theorem. However, we prove
 that when $F$ admits a repeller, equidistribution in the above sense holds true.
\end{abstract}
\maketitle

\section{Introduction}\label{S:intro}

The dynamics studied in this paper owes its origin to a work of Bullett
\cite{bullett:dqc88} and to a series of articles motivated by \cite{bullett:dqc88} --- most
notably \cite{bullettPenrose:mqmmg94, bullettHarvey:mqmKgqs00, bullettPenrose:rlshc01, bullFrei:hcmC-lmHb05}.
The object of study in \cite{bullett:dqc88} is the dynamical system that arises
on iterating a certain relation on $\cplx$. This relation is the zero set of a polynomial $g\in \cplx[z_1,z_2]$
of a certain form such that:
\begin{itemize}
 \item $g(\bcdot,z_2)$ and $g(z_1,\bcdot)$ are generically quadratic; and
 \item if $V_g$ denotes the biprojective completion of $\{g=0\}$ 
 in $\pro\times\pro$, then no irreducible component of $V_g$ is of the form 
 $\{a\}\times\pro$ or $\pro\times\{a\}$, where $a\in \pro$.
\end{itemize}
In \cite{bullettPenrose:rlshc01},
this set-up was extended to polynomials $g\in \cplx[z_1,z_2]$ of arbitrary degree that
induce relations $V_g\subset \pro\times\pro$ with similar properties. Since relations can be
composed, it would be interesting to know whether the iterated composition of such a relation
exhibits equidistribution properties in analogy
to Brolin's Theorem \cite[Theorem 16.1]{brolin:isirf65}.
\smallskip

The reader will be aware of recent results by Dinh and Sibony \cite{dinhSibony:dvtma06}
that, it would seem, should immediately solve the above problem. {\em However, key assumptions in the
theorems of \cite{dinhSibony:dvtma06} fail to hold for many interesting correspondences on $\pro$.}
We shall discuss what this assertion means in the remainder of this section.
\smallskip

On the dynamics of multivalued maps between complex manifolds: results of
perhaps the broadest scope are established in \cite{dinhSibony:dvtma06}. We borrow from
\cite{dinhSibony:dvtma06} the following definition.

\begin{definition}\label{D:holCorr}
Let $X_1$ and $X_2$ be two compact complex manifolds of dimension $k$. We say that $\corr$ is
a {\em holomorphic $k$-chain} in $X_1\times X_2$ if $\corr$ is a formal linear combination
of the form
\begin{equation}\label{E:stdForm}
 \corr \ = \sum_{j=1}^N m_j\Gamma_j,
\end{equation}
where the $m_j$'s are positive integers and $\Gamma_1,\dots,\Gamma_N$ are distinct irreducible
complex subvarieties of $X_1\times X_2$ of pure dimension $k$. Let $\pi_i$ denote the projection onto 
$X_i, \ i=1,2$. We say that the holomorphic
$k$-chain {\em $\corr$ determines a meromorphic correspondence of $X_1$ onto $X_2$} if, for each 
$\Gamma_j$ in \eqref{E:stdForm}, $\left.\pi_1\right|_{\Gamma_j}$ and $\left.\pi_2\right|_{\Gamma_j}$
are surjective. $\corr$ determines a set-valued map, which we denote by $F_\corr$, as follows: 
\[
 X_1\ni x \longmapsto \bigcup_{j=1}^N \pi_2\left(\pi_1^{-1}\{x\}\cap \Gamma_j\right).
\]
We call $F_\corr$ a {\em holomorphic correspondence} if $F_\corr(x)$ is a finite set for every $x\in X_1$.
\end{definition}   

\begin{remark}\label{Rem:explCorr}
It is helpful to encode holomorphic correspondences as holomorphic chains. Circumstances arise
where, in the notation of \eqref{E:stdForm}, $m_j\geq 2$. For instance: even if we
start with a holomorphic correspondence on $\pro$ determined by an {\em irreducible} variety
$V\subset \pro\times\pro$, composing $V$ with itself (see Section~\ref{S:keyDefns}) can result in a
variety that is not irreducible and some of whose irreducible components occur with multiplicity\,$\geq 2$.
\end{remark}  

Suppose $(X,\omega)$ is a compact K{\"a}hler manifold of dimension $k$ ($\omega$ denoting the
normalised K{\"a}hler form) and $F$ is a meromorphic correspondence  of $X$ onto itself. One of the results
in \cite{dinhSibony:dvtma06} says, roughly, that if $d_{k-1}(F)< d_k(F)$, where $d_{k-1}(F)$ and $d_k(F)$ are
the dynamical degrees of $F$ of order $(k-1)$ and $k$ respectively (see \cite[\S{3.5}]{dinhSibony:dvtma06}
for a definition of dynamical degree), then there exists a probability measure
$\mu_F$ satisfying $F^*(\mu_F) = d_k(F)\mu_F$, such that
\begin{equation}\label{E:asymp1}
 \frac{1}{d_k(F)^n}(F^n)^*(\omega^k) \weakST \mu_F \;\; \text{as measures, as $n\to \infty$.}
\end{equation}
When ${\rm dim}_\cplx(X)=1$, the assumption $d_{k-1}(F)< d_k(F)$ translates into
the assumption that the (generic) number of pre-images under $F$ is {\em strictly larger} than the
number of images under $F$, both counted according to multiplicity.
None of the techniques in the current literature are of help in studying
correspondences $F$ for which $d_{k-1}(F)\geq d_k(F)$, even when $(X,\omega)=(\mathbb{P}^k,\FS)$
(in this paper $\FS$ will denote the Fubini--Study form).
\smallskip

Why should one be interested in the dynamics of a correspondence $F:X\to X$ for which
$d_{k-1}(F)\geq d_k(F)$\,? The work of Bullett and collaborators suggest several reasons in the
case $(X,\omega)=(\pro,\FS)$. Thus, {\em we shall focus on correspondences on
$\pro$} (although parts of our results hold true for Riemann surfaces). A mating of two monic 
polynomials on $\cplx$ is a construction by Douady \cite{douady:sdh83} that, given two monic
polynomials $f, g\in \cplx[z]$ of the same degree, produces a continuous branched covering $(f\amalg g)$
of a topological sphere to itself whose dynamics emulates that of $f$ or of $g$ on separate hemispheres.
For certain natural choices of pairs $(f,g)$, one can determine in principle --- see
\cite[Theorem 2.1]{lei:mqp92} --- when $(f\amalg g)$ is semiconjugate to a rational
map on $\pro$. In a series of papers \cite{bullettPenrose:mqmmg94, bullett:ctccamp00,
bullettHarvey:mqmKgqs00, bullFrei:hcmC-lmHb05}, Bullett and collaborators extend this idea 
to matings between polynomial maps and certain discrete subgroups of the M{\"o}bius group or certain Hecke
groups. The holomorphic objects whose dynamics turn out to be conjugate to that of matings in this
new sense are holomorphic correspondences on $\pro$. Such correspondences are interesting because they
expose further the parallels between the dynamics of Kleinian groups and of rational maps. It would
be interesting to devise an ergodic theory for
such matings. {\em In all known constructions where a holomorphic correspondence
$F$ of $\pro$ models the dynamics of a mating of some polynomial with some group, $d_0(F)=d_1(F)$}.
In this context, to produce an invariant measure --- and, especially, to give an explicit prescription
for it ---  would require that the techniques in \cite{dinhSibony:dvtma06} be supplemented by other ideas.
\smallskip

We now give an informal description of our work (rigorous statements are given in
Section~\ref{S:results}). Since we mentioned Brolin's theorem, we ought to mention that an analogue
of Brolin's theorem follows from \eqref{E:asymp1} and certain other results in \cite{dinhSibony:dvtma06}
when $d_0(F)<d_1(F)$. To be more precise: there exists a polar set $\excep\varsubsetneq \pro$ such that
\begin{equation}\label{E:asymp2}
 \frac{1}{d_1(F)^n}(F^n)^*(\delta_x) \weakST \mu_F \;\; \text{as measures, as $n\to \infty$,} \;\; 
 \forall x\in \pro\setminus\excep,
\end{equation}
where $\mu_F$ is as in \eqref{E:asymp1} with $(X,\omega)=(\pro,\FS)$. This means that we have
extremely precise information about the measure $\mu_F$. Our first theorem
(Theorem~\ref{T:d_1more}) uses this information to show that the support of $\mu_F$
is disjoint from the normality set of $F$, where ``normality set'' is the analogue of the Fatou set 
in the context of correspondences.
\smallskip

When $F$ (a holomorphic correspondence on $\pro$) satisfies $d_{0}(F)\geq d_1(F)$, there is no
reason to expect \eqref{E:asymp2}. Indeed, consider these examples: $F_1(z):=1/z$, in which case
$d_0(F_1)=d_1(F_1)=1$; or the  holomorphic correspondence $F_2$ determined by the
$\pro\times\pro-$completion of the zero set of the rational function
$g(z_1,z_2)=z_2^2-(1/z_1^2)$ , in which case $d_0(F_2)=d_1(F_2)=2$.
When $d_{0}(F)\geq d_1(F)$, we draw upon certain ideas of McGehee \cite{mcgehee:acrcHs92}. We
show that if $F$ admits a repeller $\rep\subset \pro$ --- in the sense of McGehee, which extends the concept of
a repeller known for maps --- having certain
properties, then there exists a neighbourhood $U(F,\rep)\supset \rep$ and a probability measure
$\mu_F$ satisfying $F^*(\mu_F) = d_1(F)\mu_F$, such that
\begin{equation}\label{E:asymp3}
 \frac{1}{d_1(F)^n}(F^n)^*(\delta_x) \weakST \mu_F \;\; \text{as measures, as $n\to \infty$,} \;\;
 \forall x\in U(F,\rep).
\end{equation}
A rigorous statement of this is given by Theorem~\ref{T:d_1less}. The condition that $F$ admit a
repeller is very natural, and was motivated by the various examples constructed by Bullett
{\em et al}. We take up one class of these examples in Section~\ref{S:example} and show that the
conditions stated in Theorem~\ref{T:d_1less} hold true for this class. Observe that \eqref{E:asymp3} differs
from \eqref{E:asymp2} in that it does not state that $\pro\setminus U(F,\rep)$ is polar (or even nowhere
dense), but this is the best one can expect (see Remark~\ref{Rem:care} below).
\smallskip

The measure $\mu_F$ in \eqref{E:asymp2} and \eqref{E:asymp3} is not, in general,
invariant under $F$ in the measure-theoretic sense. It is merely $F^*$-invariant; see 
Section~\ref{S:results} for details. What is interesting to find is the phenomenon of equidistribution,
which arises in so many situations (see, e.g., work of Clozel, Oh and Ullmo
\cite{clozelHeeUllmo:HoeHp01}, which involves correspondences in a different context).
Yet, for a holomorphic correspondence $F$ on $\pro$ with $d_1(F)\leq d_0(F)$, we
can show that there exists a measure that {\em is} invariant under $F$ in the usual sense; see
Corollary~\ref{C:d1_less}.

\subsection*{Acknowledgements.} A part of this work was carried out during a visit by
Shrihari Sridharan to the Indian Institute of Science in 2012. He thanks the Department
of Mathematics, Indian Institute of Science, for its support and hospitality. Gautam Bharali thanks
Shaun Bullett for his very helpful answers to several questions pertaining to the example in
Section~\ref{SS:Kleinian}. The authors thank the anonymous referee of an earlier version of this
article for useful suggestions for improving the exposition herein.
\medskip   

\section{Fundamental definitions}\label{S:keyDefns}

In this section, we isolate certain essential definitions that are somewhat long. Readers who
are familiar with the rule for composing holomorphic correspondences can proceed to
Section~\ref{SS:normality}, for a definition of the normality set of a holomorphic correspondence.
\smallskip

\subsection{The composition of two holomorphic correspondences}\label{SS:compo}
Let $X$ be a complex manifold of 
dimension $k$. For any holomorphic $k$-chain $\corr$ on $X$, we define the {\em support} of 
$\corr$, assuming the representation \eqref{E:stdForm}, by
$|\corr| \ := \ \cup_{j=1}^N\Gamma_j$.
Consider two holomorphic correspondences, determined by the $k$-chains
\[
 \corr^1 \ = \ \sum_{j=1}^{N_1} m_{1,\,j}\Gamma_{1,\,j}, \qquad 
 \corr^2 \ = \ \sum_{j=1}^{N_2} m_{2,\,j}\Gamma_{2,\,j},
\]
in $X\times X$. The $k$-chains $\corr_1$, $\corr_2$ have the alternative representations
\begin{equation}\label{E:alt}
 \corr^1 \ = \ \Add{1\leq j\leq L_1}\!\IVar_{1,\,j}, \qquad
 \corr^2 \ = \ \Add{1\leq j\leq L_2}\!\IVar_{2,\,j},
\end{equation}
where the primed sums indicate that the irreducible subvarieties 
$\IVar_{s,\,j}, \ j=1,\dots,L_s$, are {\em not necessarily distinct} and are repeated according
to the coefficients $m_{s,\,j}$. Before we give the definition of $\corr^2\circ\corr^1$, observe that
we may view $\corr^1$ and $\corr^2$ as {\em relations} $|\corr^1|$ and $|\corr^2|$
on $X$. The composition-rule for relations is classical. Denoting the composition of $|\corr^1|$ and $|\corr^2|$ in
the classical sense by $|\corr^2|\star|\corr^1|$, we recall:
\begin{equation}\label{E:classic}
 |\corr^2|\star|\corr^1| \ := \ \{(x,z)\in X\times X: \exists y\in X \ \text{s.t.} \
                                (x,y)\in |\corr^1|, \ (y,z)\in |\corr^2|\}.
\end{equation}
This is the view we take in  Section~\ref{S:proof-d_1less}, where we need to make use of McGehee's
results from \cite{mcgehee:acrcHs92} on the dynamics of closed relations on compact spaces.
\smallskip

The object $\corr^2\circ\corr^1$ is, in essence, just the composition of two relations together with data that
allows one to count forward and backward images of points ``according to multiplicity''. To
begin, we define the $k$-chain $\IVar_{2,\,l}\circ\IVar_{1,\,j}$
by the conditions:
\begin{align}
 |\IVar_{2,\,l}\circ\IVar_{1,\,j}| \ := \ &\{(x,z)\in X\times X: \exists y\in X \ \text{s.t.} \
				(x,y)\in \IVar_{1,\,j}, \ (y,z)\in \IVar_{2,\,l}\}, \label{E:compos1}\\
 \IVar_{2,\,l}\circ\IVar_{1,\,j} \
 \equiv \ &\sum\nolimits_{1\leq s\leq N(j,l)}\!\nu_{s,\,jl}Y_{s,\,jl}, \notag
\end{align}
where the $Y_{s,\,jl}$'s are the distinct irreducible components of the subvariety on the right-hand 
side of \eqref{E:compos1}, and $\nu_{s,\,jl}\in \zahl_+$ is the generic number $y$'s as $(x,z)$ 
varies through $Y_{s,\,jl}$ for which the membership conditions on the right-hand side of 
\eqref{E:compos1} are satisfied. Finally, we define the $k$-chain
\begin{equation}\label{E:compos2}
 \corr^2\circ\corr^1 \ := \ \sum_{j=1}^{L_1}\sum_{l=1}^{L_2}\IVar_{2,\,l}\circ\IVar_{1,\,j}.
\end{equation}
If $\corr^1$ and $\corr^2$ determine holomorphic correspondences on $X$, then so
does $\corr^2\circ\corr^1$. It requires some amount of intersection theory to show this,
but see Section~\ref{S:basic} for an {\em elementary} proof when
$X = \pro$. The $n$-fold iterate of $\corr$ will be denoted by $\corr^{\circ n}$.
\smallskip

We adopt a notational simplification. Given a $k$-chain $\corr$ that determines
a holomorphic correspondence and there is no scope for confusion, we shall denote
$F_\corr$ by $F$. We conclude with a simple observation: for holomorphic correspondences
\begin{equation}\label{E:impo}
 |\corr^2\circ\corr^1| \ = \ |\corr^2|\star|\corr^1|.
\end{equation}

\subsection{The normality set of a holomorphic correspondence on a Riemann surface}\label{SS:normality}
Let $F$ be a holomorphic correspondence on a compact Riemann surface $\sro$. The essential
concept of the normality set of $F$ is not difficult. But we will need formalism that
enables good book-keeping. We will use the representation
\eqref{E:alt} for a holomorphic correspondence $\corr$. The set of integers
$\{m,m+1,\dots,n\}$ will be denoted by $\interR{m}{n}$.
\smallskip

Given $N\in \zahl_+$, we say that $(z_0,\dots,z_N;\,\alpha_1,\dots,\alpha_N)\in
\sro^{N+1}\times\interR{1}{L}^N$ is {\em a
path of an iteration of $F$ starting at $z_0$, of length $N$}, or simply an {\em $N$-path starting
at $z_0$}, if
\[ 
 (z_{j-1},z_j)\in \IVar_{\alpha_j}, \; \; j=1,\dots,N.
\] 
Next, given any two irreducible subvarieties $\IVar_j$ and $\IVar_k$ in the decomposition of
$\corr$ in the sense of \eqref{E:alt}, we define
\[
 \IVar_{(j,k)} \ := \
                        \{(x,y,z)\in \sro^3: (x,y)\in \IVar_{j}, \ (y,z)\in \IVar_{k}\}.
\]
This construction can be extended to any multi-index $\mul{\alpha}\in \interR{1}{L}^j$:
\begin{equation}\label{E:jcomp}
 \IVar_{\mul{\alpha}} \ := \
 \{(x_0,\dots,x_j)\in \sro^{j+1}: (x_{i-1}, x_i)\in \IVar_{\alpha_i}, \ 1\leq i\leq j\}.
\end{equation}

In all discussions on the normality set of $F$, we shall work with only those
$N$-paths $(z_0,\dots,z_N;\,\alpha_1,\dots,\alpha_N)$ that satisfy
\begin{itemize}
 \item[$(*)$] For each $j=1,\dots,N$, $\IVar_{(\alpha_1,\dots,\alpha_j)}\cap B_j$
 is an irreducible complex-analytic subvariety of $B_j$ for every sufficiently small open ball $B_j\ni (z_0,\dots,z_j)$.
\end{itemize}
An $N$-path will be called an {\em admissible $N$-path} if it 
satisfies $(*)$. Fix $z_0\in \sro$ and set
\[
 \path_N(z_0) \ := \ \text{the set of all paths of iterations of $F$, of length $N$,
                        starting at $z_0$,}
\]
$N\in \zahl_+$. We will denote an element of $\path_N(z_0)$ by $\Pth$.
Observe that if $\Pth$ is an admissible $N$-path, $N\geq 2$, then there is a {\em unique} irreducible
component of $\IVar_{(\alpha_1,\dots,\alpha_j)}$ to which $(z_0,\dots,z_j)$ belongs, $j=2,\dots,N$.
Hence, if $\Pth$ is an admissible $N$-path, let us write
\[
 \IVar_{\Pth,\,j} \ := \ \begin{cases}
			\IVar_{\alpha_1}, &\text{if $j=1$}, \\
			\text{\small{the unique irreducible component of 
			$\IVar_{(\alpha_1,\dots,\alpha_j)}$
			containing $(z_0,\dots,z_j)$}}, &\text{if $j\geq 2$}.
			\end{cases}
\]
Let $\nu_{(\Pth,\,j)}:Y_{\Pth,\,j}\lrarw \IVar_{\Pth,\,j}$, where $Y_{\Pth,\,j}$
is a compact Riemann surface, denote the desingularization of $\IVar_{\Pth,\,j}$.  We now have the
essential notations needed to define the normality set. The definitions that follow are strongly
influenced by the notion introduced by Bullett and Penrose \cite{bullettPenrose:rlshc01}. Yet, what
we call a ``branch of an iteration'' will look vastly different from its namesake in
\cite{bullettPenrose:rlshc01}. This is because, for our purposes, we will need to label (resp., track)
all the maps involved by (resp., along) the various paths that comprise $\path_N(z_0)$.
I.e., the difference is (largely) in formalism.
The one alteration that we make to the Bullett--Penrose 
definition is in working with only admissible paths: this allows us to parametrise holomorphic branches of the
iteration of $F$ along $\Pth$ --- which we shall presently define --- simply by $\Pth$ itself 
(see Remark~\ref{Rem:horrors} as well).
\smallskip

Let $\prjc_k$ and $\pi^j_k$ denote the following projections:
\begin{align}
 \prjc_k:\sro^{k+1}\lrarw \sro^{k}, \; \; \prjc_k&:
                (x_0,\dots,x_k)\longmapsto (x_0,\dots,x_{k-1}), \notag \\
 \pi^j_k:\sro^{k+1}\lrarw \sro, \; \; \pi^j_k&:
                (x_0,\dots,x_k)\longmapsto x_j, \; \; 0\leq j\leq k, \notag
\end{align}
where $k\in \zahl_+$. The idea of a holomorphic branch of an iteration of $F$ along $\Pth$,
$\Pth\in \path_N(z_0)$ and admissible, is to assign to a suitable
open neighbourhood $U\ni z_0$ a finite sequence $(U(\Pth,j))_{1\leq j\leq N}$
of $\IVar_{\Pth,\,j}$-open sets. In the case $j = 1$, the natural choice of $U(\Pth,1)$ is
the irreducible component of  $\IVar_{\Pth,\,1}\cap(U\times X)$ containing $(z_0, z_1)$. Proceed
inductively and set: 
\begin{align*}
 U(\Pth,j) \ &= \ \text{\small{the irreducible component of $\OM(\Pth,j)\cap\IVar_{\Pth,\,j}$ containing
 $(z_0,\dots,z_j)$, and}} \\
 \OM(\Pth,j) \ &:= \ \begin{cases}
 				 U\times\sro, &\text{if $j=1$}, \\
 				 \left[
 				 U\times \prod_{k=1}^{j-1}\pi_k^k(U(\Pth,k))\right]\times\sro, &\text{if $j\geq 2$.}
 				 \end{cases}
\end{align*}
The phrase ``irreducible component'' above signifies that
$\OM(\Pth,j)\cap\IVar_{\Pth,\,j}$ be viewed as a complex-analytic subvariety of $\OM(\Pth,j)$.
Now, if each $U(\Pth,j)$ were obtainable as the graph of some holomorphic map
$\Psi_j : U\lrarw \cplx^{j}$, then the sequence $(\Psi_j)_{1\leq j\leq N}$ would be a natural
definition of a {\em holomorphic} branch along $\Pth$; see Remark~\ref{Rem:illust}. 
However, if some  $U(\Pth,j)$ has a singularity at $(z_0,\dots,z_j)$, or does not project
injectively under $\pi^0_j$, then the preceding idea does not work. Therefore, we modify this
idea by substituting the aforementioned $(\Psi_j)_{1\leq j\leq N}$ by a sequence of natural
parametrisations of the $U(\Pth,j)$'s.
\smallskip

To this end, fix a $z_0\in \sro$ and
an {\em admissible} path $\Pth\in \path_N(z_0)$. Each $U(\Pth,j)$ is parametrised via
$\nu_{(\Pth,\,j)}$ by an open patch in the Riemann surface $Y_{\Pth,\,j}$. We say that
{\em there exists a holomorphic branch of an iteration of $F$ on $U$ along $\Pth$}, which we
denote by $(\psi_{(\Pth,\,1)},\dots,\psi_{(\Pth,\,N)};\,U)$, if there exist a connected neighbourhood
$U$ of $z_0$ and holomorphic mappings $\psi_{(\Pth,\,j)}:\dom\lrarw Y_{(\Pth,\,j)}$ of
a planar domain $\dom$ containing $0$ that, for each $j=1,\dots,N$, satisfy three conditions:
\begin{itemize}
 \item[1)] $\nu_{(\Pth,\,j)}\circ \psi_{(\Pth,\,j)}(0)=(z_0,\dots,z_j)$.
 \item[2)] (Compatibility condition) $\prjc_{j}\circ\nu_{(\Pth,\,j)}\circ\psi_{(\Pth,\,j)}=
 \nu_{(\Pth,\,j-1)}\circ\psi_{(\Pth,\,j-1)}$, $j\neq 1$.
\end{itemize}
A part of our third condition will encode the requirement that $U(\Pth,1),\dots, U(\Pth,N)$ contain no
singularities at which they are locally reducible (i.e., as analytic germs). (Loosely speaking, this ensures that
any point $z_*\neq z_0$ sufficiently close to $z_0$ will have a holomorphic branch of an iteration of $F$ along
{\em some} admissible path in $\path_N(z_*)$ that is ``sufficiently close'' to
$(\psi_{(\Pth,\,1)},\dots,\psi_{(\Pth,\,N)})$ --- see Remark~\ref{Rem:open}.) More precisely:
\begin{itemize}
 \item[3)] $\psi_{(\Pth,\,j)}$ is a finite-sheeted (perhaps branched) covering map onto 
 $\nu_{(\Pth,\,j)}^{-1}(U(\Pth,j))$, and $\nu_{(\Pth,\,j)}$ maps the latter
 set {\bf homeomorphically onto} $U(\Pth,j)$.
\end{itemize}

\begin{remark}\label{Rem:illust}
Note that if $F$ is a non-constant rational {\em map} on $\pro$, then the maps
\[
 \psi_{(\Pth,\,j)} \ := \
 \left({\sf id}_{\dom}+z_0,\left.F(\bcdot+z_0)\right|_{\dom},\dots,
 \left.F^{j-1}\circ F(\bcdot+z_0)\right|_{\dom}\right), \; \; \; j=1,2,3,\dots,
\]
where $\dom$ is a small disc around $0$, satisfy all the conditions above (taking
each $Y_{(\Pth,\,j)}$ to be the graph of the appropriate $(\pro)^j$-valued map). 
\end{remark}

\noindent{Having defined holomorphic
branches, we can give the following definition.}

\begin{definition}\label{D:NSet}
Let $\sro$ be a compact Riemann surface and let
$F$ be a holomorphic correspondence on $\sro$. A point $z_0\in \sro$ is said to belong
to the {\em normality set of $F$}, denoted by $\norm(F)$, if there exists a connected
neighbourhood $U$ of $z_0$ and a {\bf single} planar domain $\dom$ containing $0$,
which depends on $z_0$, such that
\begin{itemize}
 \item[1)] For each $n\in \zahl_+$ and each $\Pth\in \path_n(z_0)$, there exists
 a holomorphic branch $(\psi_{(\Pth,\,1)},\dots,\psi_{(\Pth,\,n)};\,U)$ of an iteration of $F$ along $\Pth$
 with ${\sf Dom}(\psi_{(\Pth,\,j)})=\dom$ for every $(\Pth,\,j)$.
 \item[2)] The family
 \begin{align*}
 \Fml(z_0) \ := \ &\left\{\pi_n^n\circ\nu_{(\Pth,\,n)}\circ\psi_{(\Pth,\,n)}: n\in \zahl_+,\;
                        \Pth\in \path_n(z_0), \ \text{\& $(\psi_{(\Pth,\,1)},\dots,\psi_{(\Pth,\,n)};\,U)$}\right. \\
                &\qquad\left.\text{is a holomorphic branch of an iteration of $F$ along $\Pth$}\right\}
 \end{align*}
 is a normal family on $\dom$.
\end{itemize}
\end{definition}

\begin{remark}\label{Rem:open}
The set $\norm(F)$ is open, although it is not necessarily non-empty. If $z_0\in \norm(F)$ and $U$ is
the neighbourhood of $z_0$ as given by Definition~\ref{D:NSet}, then it is routine to show that
$U\subset \norm(F)$.
\end{remark}

\section{More definitions and statement of results}\label{S:results}

We need to present some constructions before we can state our first result.
\smallskip
   
Given a holomorphic correspondence on $X$, ${\rm dim}_{\cplx}(X)=k$, determined by a
holomorphic $k$-chain $\corr$, its
{\em adjoint correspondence} is the meromorphic correspondence determined by the $k$-chain
(assuming the representation \eqref{E:stdForm} for $\corr$)
\[
 \acorr \ := \ \sum_{j=1}^{N} m_j\aGa{j},
\]
where $\aGa{j} := \{(y,x)\in X\times X: (x,y)\in \Gamma_j\}$.
In general, $\acorr$ may not determine a holomorphic correspondence. However,
when ${\rm dim}_{\cplx}(X) = 1$, it is easy to see that {\em any meromorphic correspondence
of $X$ is automatically holomorphic}. Thus, if $F_{\corr}$ is a holomorphic correspondence on
$\pro$, then so is $F_{\acorr}$. In the abbreviated
notation introduced in Section~\ref{S:keyDefns}, we shall henceforth write:
\[
 F^n \ := \ F_{\corr^{\circ n}}, \qquad \eff \ := \ F_{\acorr}.
\]

Given a holomorphic $k$-chain $\corr$ on $X\times X$, $\corr$ detemines a current of 
bidimension $(k,k)$ via the currents of integration given by its constituent subvarieties
$\Gamma_j$. We denote this current by $[\corr]$. If $F$ is
determined by $\corr$, we {\em formally} define the action of $F$ on currents $S$ on $X$
of bidegree $(p,p)$, $0\leq p\leq k$, by the prescription:
\begin{equation}\label{E:pullback}
 F^*(S) \ := \ (\pi_1)_*\left(\pi_2^*(S)\wedge [\corr]\right),
\end{equation}
where, as usual, $\pi_j$ denotes the projection of $X\times X$ onto the $j$th factor.
This prescription will make sense for those currents for which the pullback by $\pi_2$ makes
sense and the intersection of this new current with $[\corr]$ also makes sense. That this is the
case is easy to see when $S$ is a smooth $(p,p)$ form (hence a current of bidegree $(p,p)$ on $X$).
The reader is referred to \cite[Section 2.4]{dinhSibony:dvtma06} for details.
\smallskip

A finite Borel measure $\mu$ on $X$ can be viewed as a current of bidegree $(k,k)$.
Let us work out $F^*(\mu)$ for a specific example that is central to this paper. Let $x\in X$
and let $\delta_x$ be the Dirac mass at $x$. The prescription \eqref{E:pullback} is interpreted as
\begin{align}
 \langle F^*(\delta_x), \varphi \rangle\,\equiv_{{\rm by \ duality}}\,\left\langle
 	\pi_2^*(\delta_x)\wedge [\corr], \pi_1^*\varphi \right\rangle \
 	:=& \ \sum_{j=1}^N m_j\langle(\left.\pi_2\right|_{\Gamma_j})^*(\delta_x),
								\pi_1^*\varphi \rangle \notag \\
	=& \ \sum_{j=1}^N m_j\langle \delta_x, \big(\left.\pi_2\right|_{\Gamma_j}\big)_*
								(\pi_1^*\varphi) \rangle, 
		\label{E:pullbackDirac1}
\end{align}
where each summand in the last expression is just the way one defines the pullback
of a current under a holomorphic mapping (in this case, $\left.\pi_2\right|_{\Gamma_j}, \
j=1,\dots, N$) of an analytic space that is submersive on a Zariski open subset. If $\OM\subset X$ is
a Zariski-open subset of $X$ such that $(\pi_2^{-1}(\OM)\cap\Gamma_j,\OM,\pi_2)$ is a covering
space for each $j=1,\dots,N$, then, for $x\in \OM$, $(\left.\pi_2\right|_{\Gamma_j})_*(f)(x)$ is
just the sum of the values of $f$ on the fibre $\pi_2^{-1}\{x\}\cap\Gamma_j$ for any 
$f\in \smoo^\infty(X\times X)$. Thus, when $x\in \OM$, \eqref{E:pullbackDirac1} equals the quantity
\begin{equation}\label{E:pullbackDirac2}
 \sum_{j=1}^N m_j\negthickspace\negthickspace\sum_{\zt:(\zt,x)\in\Gamma_j}\negthickspace\varphi(\zt) \
 =: \ \Lam[\varphi](x) \qquad x\in \OM.
\end{equation}
For any fixed {\em continuous} function $\varphi$, $\Lam[\varphi]$ extends
continuously to each $x\in X\setminus\OM$. We shall denote this continuous extension
of the left-hand side of \eqref{E:pullbackDirac2} also as  $\Lam[\varphi]$. In other words,
$F^*(\delta_x)$ can be defined as a measure supported on the set 
$\eff(x)$, and
\begin{equation}\label{E:pullbackDirac3}
 \langle F^*(\delta_x), \varphi \rangle \ = \ \Lam[\varphi](x) \; \; \forall x\in X, \; \;
						\forall \varphi\in \smoo(X).
\end{equation}
The arguments preceding \eqref{E:pullbackDirac2} continue to be valid if, in \eqref{E:pullbackDirac1},
$\delta_x$ is replaced by $\mu$, a finite, positive Borel measure on $X$.
\smallskip 

The push-forward of a current $S$ by $F$ is defined by the equation $F_*(S):=(\eff)^*(S)$ whenever the
latter makes sense.
\smallskip

We define two numbers that are essential to the statement of our theorems. With $F$ as
above, let $d_1(F)$ denote the generic number of preimages under
$F$ of a point in $\pro$, counted according to multiplicity. More rigorously, this
means --- $\OM$ being any Zariski-open set of the type described prior to the equation
\eqref{E:pullbackDirac2} --- that
\begin{equation}\label{E:topo_d}
 d_1(F) \ = \ \sum_{j=1}^Nm_j{\sf Card}\{x:(x,y)\in \Gamma_j\}, \quad y\in \OM,
\end{equation}
which is independent of the choice of $y\in \OM$. In other words, $d_1(F)$ is the
{\em topological degree of $F$}, often denoted as $d_{top}(F)$. Define
$d_0(F):=d_{top}(\eff)$.
\smallskip

We will first consider a holomorphic correspondence $F$ of $\pro$ such that $d_1(F) > d_0(F)$.
A very special case of a result of Dinh and Sibony is that
there exists a probability measure $\mu_F$ such that
\begin{equation}\label{E:asympFS}
 \frac{1}{d_1(F)^n}(F^n)^*(\FS) \weakST \mu_F \;\; \text{as measures, as $n\to \infty$,}
\end{equation}
where $\FS$ denotes the Fubini--Study form on $\pro$, treated as a normalised area form.
Let us call this measure the {\em Dinh--Sibony measure associated to $F$}. Since
equidistribution is among the themes of this paper, we should mention
that for a generic $z\in \pro$, $\mu_F$ is the asymptotic distribution of the iterated
pre-images of $z$. More precisely:

\begin{fact}\label{L:clarify}
Let $F$ be a holomorphic correspondence on $\pro$ such that $d_0(F)< d_1(F)$
and let $\mu_F$ be the Dinh--Sibony measure associated to $F$. There exists a 
polar set $\excep\varsubsetneq \pro$ such that for each $z\in \pro\setminus\excep$ 
 \[
 \frac{1}{d_1(F)^n}(F^n)^*(\delta_z) \weakST \mu_F \;\; \text{as measures, as $n\to \infty$}.
 \]
Consequently, $F^*(\mu_F)=d_1(F)\mu_F$.
\end{fact}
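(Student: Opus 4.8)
The plan is to treat this as (essentially) a one-dimensional instance of Dinh--Sibony's equidistribution theorem \cite{dinhSibony:dvtma06}, reducing it to a statement about potentials, and then to deduce the last assertion by a soft continuity argument. \emph{First I would reduce.} Since $\FS$ has total mass one, the $\partial\overline{\partial}$-lemma on $\pro$ writes every probability measure $\nu$ as $\nu=\FS+dd^{c}u_{\nu}$ with $u_{\nu}$ an $\FS$-subharmonic function, unique up to an additive constant; for $\nu=\delta_{z}$ this is $g_{z}$, the Green function with logarithmic pole at $z$. Writing $F^{\sharp}$ for the induced action on functions, $(F^{\sharp}h)(x):=\sum_{j}m_{j}\sum_{\zt:(\zt,x)\in\Gamma_{j}}h(\zt)$ (so $F^{\sharp}$ is to functions what $\Lam[\,\cdot\,]$ is to Dirac masses), and using that $(\pi_{1})_{*}$ commutes with $dd^{c}$, one gets for every $z$ and $n$
\begin{equation*}
 \frac{1}{d_{1}(F)^{n}}(F^{n})^{*}(\delta_{z})\ =\ \frac{1}{d_{1}(F)^{n}}(F^{n})^{*}(\FS)\ +\ dd^{c}\!\left(\frac{1}{d_{1}(F)^{n}}(F^{n})^{\sharp}g_{z}\right).
\end{equation*}
By \eqref{E:asympFS} the first term on the right tends weak${}^{\boldsymbol{*}}$ to $\mu_{F}$, so the task becomes to exhibit a polar $\excep\varsubsetneq\pro$ off which the second term tends to $0$.

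For that step I would invoke \cite[Corollaire~5.3]{dinhSibony:dvtma06}. The functions $\phi_{z,n}:=d_{1}(F)^{-n}(F^{n})^{\sharp}g_{z}$ satisfy $dd^{c}\phi_{z,n}\ge-\,d_{1}(F)^{-n}(F^{n})^{*}(\FS)$, and the right-hand sides are positive measures of mass one; so, normalised by $\int_{\pro}\phi_{z,n}\,\FS=0$, the $\phi_{z,n}$ stay in a relatively compact subset of $L^{1}(\pro)$ whose limits $\phi$ obey $dd^{c}\phi\ge-\mu_{F}$. This is exactly the setting of the quoted result, where the hypothesis $d_{0}(F)<d_{1}(F)$ is what makes the ramification (``diagonal'') contributions of strictly smaller order and forces $\phi_{z,n}\to0$ in $L^{1}(\pro)$ --- hence $dd^{c}\phi_{z,n}\to0$ --- for every $z$ outside a polar set $\excep$. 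I expect this to be the main obstacle: the strict inequality enters essentially here, and there is no way around Dinh--Sibony's potential-theoretic estimate, which is precisely why a different device is needed when $d_{0}(F)\ge d_{1}(F)$.

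Finally, to obtain $F^{*}(\mu_{F})=d_{1}(F)\mu_{F}$ I would argue as follows. By \eqref{E:pullbackDirac3}, extended to finite positive measures as remarked just after it, $\langle F^{*}\mu,\varphi\rangle=\int_{\pro}\Lam[\varphi]\,d\mu$ for every $\varphi\in\smoo(\pro)$, and $\Lam[\varphi]$ is continuous; hence $F^{*}$ is continuous for weak${}^{\boldsymbol{*}}$ convergence of measures. Applying this to \eqref{E:asympFS}, together with the compatibility of the pullback with composition of correspondences (cf. Section~\ref{S:basic} and \cite[Section~2]{dinhSibony:dvtma06}),
\[
 F^{*}(\mu_{F})\ =\ \lim_{n\to\infty}\frac{1}{d_{1}(F)^{n}}F^{*}\!\big((F^{n})^{*}(\FS)\big)\ =\ \lim_{n\to\infty}\frac{1}{d_{1}(F)^{n}}(F^{n+1})^{*}(\FS)\ =\ d_{1}(F)\,\mu_{F},
\]
all limits taken weak${}^{\boldsymbol{*}}$. (Equivalently, one may apply $F^{*}$ to the preimage equidistribution at some $z\in\pro\setminus\excep$ and use $F^{*}((F^{n})^{*}(\delta_{z}))=(F^{n+1})^{*}(\delta_{z})$.)
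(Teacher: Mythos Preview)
Your argument is essentially correct, but it takes a more laborious route than the paper does, and there are two points worth flagging.

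The paper does not unpack the $dd^{c}$-lemma at all. It quotes \cite[Th{\'e}or{\`e}me~4.6]{dinhSibony:dvtma06} directly (paraphrased here as Result~\ref{R:1stRes}): for a sequence of holomorphic correspondences $F_n$ on $\pro$ with $\sum_n d_0(F_n)/d_1(F_n)<\infty$, there is a polar set $\excep$ such that $d_1(F_n)^{-1}\big(F_n^*(\delta_z)-F_n^*(\FS)\big)\to 0$ against $\smoo^2$ test functions for every $z\notin\excep$. One then sets $F_n:=F^n$, verifies the summability via Proposition~\ref{P:degreeSeq} (i.e., $d_j(F^n)=d_j(F)^n$), combines with \eqref{E:asympFS}, and approximates $\smoo$ by $\smoo^2$. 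Your potential-theoretic set-up is in effect rederiving part of the \emph{proof} of Th{\'e}or{\`e}me~4.6 rather than simply invoking the statement; that is fine, but it is extra work. Note also that you cite Corollaire~5.3 for the polar-set step, whereas in the paper Corollaire~5.3 is what gives \eqref{E:asympFS}; the correct reference for the comparison with $\delta_z$ is Th{\'e}or{\`e}me~4.6.

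The one substantive omission in your sketch is the multiplicativity $d_j(F^n)=d_j(F)^n$. You write that ``the hypothesis $d_0(F)<d_1(F)$ is what makes the ramification contributions of strictly smaller order'', but what Th{\'e}or{\`e}me~4.6 actually needs is the convergence of $\sum_n d_0(F^n)/d_1(F^n)$, and for correspondences (as opposed to maps) this multiplicativity is not a tautology. The paper treats it as a folk lemma without a known reference and proves it separately as Proposition~\ref{P:degreeSeq}; you should cite or supply this. On the other hand, your derivation of $F^*(\mu_F)=d_1(F)\mu_F$ via weak${}^{\boldsymbol{*}}$-continuity of $F^*$ and $(F^{n+1})^*=F^*\circ(F^n)^*$ is clean and more explicit than what the paper says (there the $F^*$-invariance is simply part of the Dinh--Sibony package alongside \eqref{E:asympFS}).
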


\noindent{The above follows by combining \eqref{E:asympFS} with another result from
\cite{dinhSibony:dvtma06}.
Fact~\ref{L:clarify} will have {\em no role to play} herein
except to set the context for our results. For instance, it shows that ${\sf supp}(\mu_F)$ could be viewed --- especially
in view of Brolin's results --- as the analogue of the Julia set in the general context of correspondences. While,
for a rational map on $\pro$,
its Julia set is {\em definitionally} disjoint from its normality set, in the case of correspondences we have:}

\begin{theorem}\label{T:d_1more}
Let $F$ be a holomorphic correspondence on $\pro$ and assume that $d_0(F)< d_1(F)$.
Let $\mu_F$ be the Dinh--Sibony measure associated to $F$. Then,  ${\rm supp}(\mu_F)$ is
disjoint from the normality set of $F$.
\end{theorem}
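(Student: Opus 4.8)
Throughout write $f_{\Pth,n}:=\pi^n_n\circ\nu_{(\Pth,n)}\circ\psi_{(\Pth,n)}$ for the branch maps occurring in Definitions~\ref{D:preNSet}--\ref{D:NSet}. The plan is to show that every $z_0\in\norm(F)$ has a neighbourhood $W$ with $\mu_F(W)=0$; since $\norm(F)$ is open, this yields ${\rm supp}(\mu_F)\cap\norm(F)=\emptyset$. Fix $z_0\in\norm(F)$ and let $U$, $\dom$ be the neighbourhood and the planar domain furnished by Definition~\ref{D:NSet}, so that $\Fml(z_0)=\{f_{\Pth,n}:n\in\zahl_+,\ \Pth\in\path_n(F,z_0)\}$ is a normal family on $\dom$. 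First I would record a structural observation: writing $\beta_\Pth:=\pi^0_n\circ\nu_{(\Pth,n)}\circ\psi_{(\Pth,n)}$ for the ``base-point'' parametrisation, condition (4) of Definition~\ref{D:preNSet} (together with $\pi^0_{j-1}\circ\prjc_j=\pi^0_j$) shows that $\pi^0_j\circ\nu_{(\Pth,j)}\circ\psi_{(\Pth,j)}$ is independent of $j$, hence $\beta_\Pth=\pi^0_1\circ\nu_{(\Pth,1)}\circ\psi_{(\Pth,1)}$ depends only on the first step $(\alpha_1,z_1)$ of $\Pth$. Thus there are only finitely many distinct such maps, say $\beta^{(1)},\dots,\beta^{(r)}\colon\dom\to\pro$, each holomorphic, open near $0$ and finite-to-one over a neighbourhood of $z_0=\beta^{(i)}(0)$. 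Consequently $S_0:=\bigcup_i(\beta^{(i)})^{-1}\{z_0\}$ is a finite subset of $\dom$; I pick an open $D_1$ with $S_0\subset D_1\Subset\dom$ and then a connected neighbourhood $W$ of $z_0$ small enough that $W\subset\bigcap_i\beta^{(i)}(\dom)$ and $(\beta^{(i)})^{-1}(W)\subset D_1$ for every $i$ (possible because each $\beta^{(i)}$ is proper onto a neighbourhood of $z_0$).

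The next step is a counting identity for iterated preimages. As in the derivation of \eqref{E:pullbackDirac2}, applied to the $1$-chain $\corr^{\circ n}$ and to the positive measure $\FS$, for any $\varphi\in\smoo(\pro)$ with $0\le\varphi\le1$ and ${\rm supp}(\varphi)\subset W$ one has
\[
 \big\langle (F^n)^*(\FS),\varphi\big\rangle \ = \ \int_{\pro}\Big(\ \sum_{\zt\,:\,(\zt,y)\in|\corr^{\circ n}|,\ \text{w.\ mult.}}\negthickspace\varphi(\zt)\ \Big)\,\FS(dy),
\]
so that, for $\FS$-a.e.\ $y$, the inner sum is bounded by the number of admissible $n$-paths from a point of $W$ to $y$. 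The key claim is that every admissible $n$-path $(x,w_1,\dots,w_n;\,\mul{\alpha})$ with $x\in W$ coincides with the value at some $\zt\in\beta_\Pth^{-1}(x)$ of a holomorphic branch of an iteration of $F$ along a (uniquely determined) $\Pth\in\path_n(F,z_0)$ with ${\sf Dom}(\psi_{(\Pth,j)})=\dom$ --- i.e. the branches guaranteed by $z_0\in\norm(F)$ account for all nearby admissible paths. Granting this, for $\FS$-a.e.\ $y$ the inner sum is at most $\sum_{\Pth\in\path_n(F,z_0)}\#\{\zt\in\beta_\Pth^{-1}(W):f_{\Pth,n}(\zt)=y\}$.

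Integrating this over $\pro$ against $\FS$ and using the change-of-variables formula for the holomorphic maps $f_{\Pth,n}$, together with $\beta_\Pth^{-1}(W)\subset D_1$, gives
\[
 \big\langle (F^n)^*(\FS),\varphi\big\rangle \ \leq \sum_{\Pth\in\path_n(F,z_0)}\int_{\beta_\Pth^{-1}(W)}\negthickspace f_{\Pth,n}^*(\FS)\ \leq \sum_{\Pth\in\path_n(F,z_0)}\int_{D_1} f_{\Pth,n}^*(\FS).
\]
Since $\Fml(z_0)$ is a normal family on $\dom$, Marty's criterion bounds the spherical derivatives of its members uniformly on $\overline{D_1}\subset\dom$, whence there is a constant $K=K(z_0,W)$ with $\int_{D_1}f_{\Pth,n}^*(\FS)\le K$ for all $n$ and all $\Pth\in\path_n(F,z_0)$. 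Finally $\#\path_n(F,z_0)\le d_0(F)^n$, since at each step an $n$-path admits at most $d_0(F)$ choices of (next component, next point), counted with multiplicity. Combining, $\big\langle (F^n)^*(\FS),\varphi\big\rangle\le K\,d_0(F)^n$, so by \eqref{E:asympFS}
\[
 \langle\mu_F,\varphi\rangle \ = \ \lim_{n\to\infty}\frac{1}{d_1(F)^n}\big\langle (F^n)^*(\FS),\varphi\big\rangle \ \leq \ K\lim_{n\to\infty}\left(\frac{d_0(F)}{d_1(F)}\right)^{\!n} \ = \ 0,
\]
using $d_0(F)<d_1(F)$. Choosing $\varphi\equiv1$ on a neighbourhood $W'\subset W$ of $z_0$ forces $\mu_F(W')=0$, so $z_0\notin{\rm supp}(\mu_F)$.

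The main obstacle is the key claim in the second step: that the holomorphic branches of iterations of $F$ along paths starting at $z_0$ capture, with at least the correct count, every admissible iterated preimage lying in the small neighbourhood $W$. This is precisely what the elaborate formalism of Definition~\ref{D:preNSet} was set up to handle --- the singularities of $|\corr^{\circ n}|$, the failure of $\pi_1$ to be injective, and the need for parametrisations rather than genuine maps --- and carrying it out requires a careful identification of the relevant irreducible components along $\Pth$ and of their analytic continuation as the base point moves over $W$, plus the (routine) verification that the exceptional loci where admissibility fails are $\FS$-null and hence contribute nothing to the integrals above.
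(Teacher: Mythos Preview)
Your approach is essentially the paper's: both reduce $\langle (F^n)^*(\FS),\varphi\rangle$ to a sum over $\path_n(F,z_0)$ of pulled-back integrals on a fixed relatively compact subset of $\dom$, invoke Marty's criterion for the uniform spherical-derivative bound, and use $\#\path_n(F,z_0)\le d_0(F)^n$ to conclude. Your structural observation that $\pi^0_j\circ\nu_{(\Pth,j)}\circ\psi_{(\Pth,j)}$ is independent of $j$ is exactly the paper's identity~\eqref{E:zeroRel}, and your choice of $W$ and $D_1$ plays the role of the paper's $K^{z_0}$ and $G$.

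The one place you diverge is the step you yourself flag as ``the main obstacle'': you phrase it as a correspondence between admissible $n$-paths starting at nearby $x\in W$ and evaluations of the $z_0$-branches at points of $\beta_\Pth^{-1}(x)$, and then worry about analytic continuation and null sets. The paper sidesteps this formulation entirely. It works directly with the current of integration $[\corr^{\circ n}]$ and asserts (in one sentence, as a consequence of Definition~\ref{D:preNSet}) that $\pi_1^{-1}(K^{z_0})\cap|\corr^{\circ n}|$ is \emph{covered} by the patches $\Ptmap(\dom):=(\pi^0_n\times\pi^n_n)\circ\nu_{(\Pth,n)}\circ\psi_{(\Pth,n)}(\dom)$ as $\Pth$ ranges over $\path_n(F,z_0)$; this yields the exact identity~\eqref{E:1stCalc} for $\langle (F^n)^*(\FS),\varphi\rangle$ (with degree factors $1/({\rm deg}(\Pth)\,{\rm deg}(\Ptmap))$), after which two changes of variables reduce everything to integrals over $\overline{G}\Subset\dom$. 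In other words, the ``capturing'' you worry about is packaged as a covering of the variety rather than a bijection of paths, and the admissibility issue does not arise because membership of $z_0$ in $\norm(F)$ already forces every $\Pth\in\path_n(F,z_0)$ to be admissible. If you rewrite your second step in this geometric language, the obstacle dissolves and your argument becomes the paper's.
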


The ideas behind the proof are as follows. For $\norm(F)\neq \varnothing$,
we shall show that one can apply Marty's normality criterion in such a manner as to deduce that the
volumes of any compact $K\Subset \norm(F)$ given by the measures induced by $(F^n)^*(\FS)$
are bounded independent of $n$. The result then follows due to the fact that
$d_1(F)^{-n}\to 0$ as $n\to \infty$.  
\smallskip

The situation is {\em very different} when $d_0(F)\geq d_1(F)$. To repeat: we
should not expect asymptotic equidistribution of preimages in general, even 
when $d_0(F), d_1(F)\geq 2$, 
as the holomorphic correspondence $F$ whose graph is the $\pro\times\pro-$completion of the 
zero set of the rational function $g(z_1,z_2)=z_2^2-(1/z_1^2)$ illustrates. We require
some dynamically meaningful condition for things to work. It is this
need that motivates the next few definitions. Let $X$ be a compact Hausdorff space and
let $f\subset X\times X$ be a relation of $X$ to itself such that $\pi_1(f)=X$.
For any set $S\subset X$, we write
\[
 f(S) \ := \ \pi_2\left(\pi_1^{-1}(S)\cap f\right).
\]
We define the $n$th iterated relation by
\begin{align}
 f^{(n)} \ &:= \ f\star f^{(n-1)} \; \; \text{for $n\geq 2$}, \label{E:classic2} \\
 f^{(1)} \ &:= \ f, \notag
\end{align}
where the composition operation $\star$ is as given by \eqref{E:classic} above. 
It is useful to have a notion of omega limit sets in the context of iterating a relation
analogous to the case of maps. This definition is provided
by McGehee in \cite[Section 5]{mcgehee:acrcHs92}. Following McGehee, for a subset 
$S\subset X$, let us write
\[
 \mathfrak{K}(S;f) \ := \{K\subset_{\rm closed}X: K \ \text{satisfies $f(K)\subset K$ and 
				$f^{(n)}(S)\subset K$ for some $n\geq 0$}\}
\]
(with the understanding that $f^{(0)}$ is the diagonal in $X\times X$). The
{\em omega limit set of $S$ under $f$}, denoted by $\omega(S;f)$, is the set
\[
 \omega(S;f) \ := \ \bigcap\mathfrak{K}(S;f).
\]
We say that a set $\mathcal{A}\subset X$ is an {\em attractor} for $f$ if 
$\mathcal{A}\neq X$ and there
exists a set $U$ such that $\mathcal{A}\subset U^\circ$ and such that 
$\omega(U;f)=\mathcal{A}$.
\smallskip

These concepts motivate the following two definitions in the context of holomorphic correspondences.

\begin{definition}\label{D:repeller}
Let $F$ be a holomorphic correspondence on a Riemann surface $X$ given by the holomorphic 
$1$-chain $\corr$. A set $\mathcal{A}\subset X$ is called an {\em attractor} for $F$ if it is an attractor
for the relation $|\corr|$ in the sense of \cite{mcgehee:acrcHs92} (i.e., as discussed above). A set 
$\rep$ is called a {\em repeller} for $F$ if it is an attractor for the relation $|\acorr|$.
\end{definition}

\noindent{We must note here McGehee calls the relation on $X$ induced by $|\acorr|$ the {\em transpose} 
of $|\corr|$, and our $|\acorr|$ is $|\corr|^*$ in the notation of \cite{mcgehee:acrcHs92}.}

\begin{definition}\label{D:strongRep}
Let $F$ be as above and let $\rep$ be a repeller for $F$.
We say that $\rep$ is a {\em strong repeller} for $F$ if there exists a point $a_0\in \rep$
and an open set $U\supset \rep$ such that for each $w\in U$, there exists a sequence
$\{a_n(w)\}_{n\in \zahl_+}$ such that
\begin{itemize}
 \item $a_n(w)\in (\eff)^n(w) \ \forall n\in \zahl_+$; and
 \item $a_n(w)\lrarw a_0$ as $n\to \infty$.
\end{itemize}
The term {\em strong attractor} has an analogous definition.
\end{definition} 

We
call $w\in \pro$ a {\em critical value} if there exists an irreducible component $\Gamma_j$ such that 
at least one of the irreducible germs of $\Gamma_j$ at some point in $\pi_2^{-1}\{w\}\cap\Gamma_j$
is either non-smooth or does {\em not} project injectively under $\pi_2$.
\smallskip

We are now in a position to state our next result.

\begin{theorem}\label{T:d_1less}
Let $F$ be a holomorphic correspondence on $\pro$ for which $d_0(F)\geq d_1(F)$. Assume that
$F$ has a strong repeller $\rep$ that is disjoint from the set of critical values of $F$. Then,
there exist a probability measure $\mu_F$ on $\pro$ that satisfies
$F^*(\mu_F)=d_1(F)\mu_F$ and an open set $U(F,\rep)\supset \rep$ 
such that
\begin{equation}\label{E:equi}
 \frac{1}{d_1(F)^n}(F^n)^*(\delta_z) \weakST \mu_F \;\; \text{as measures, as $n\to \infty$} \;\;
 \forall z\in U(F,\rep).
\end{equation}
\end{theorem}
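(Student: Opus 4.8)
The plan is to transfer the equidistribution result \`a la Brolin from the transpose dynamics on a neighbourhood of the repeller, to a potential-theoretic statement about the measures $(F^n)^*(\delta_z)$. The starting observation is that a strong repeller $\rep$ for $F$ is a strong attractor for $\eff$, and since $d_0(F)\geq d_1(F)$, the transpose correspondence $\eff$ plays the role of the ``expanding'' dynamics: $d_{top}(\eff)=d_0(F)\geq d_1(F)=d_{top}(F)$. However, one cannot simply invoke Fact~\ref{L:clarify} for $\eff$ — that would give equidistribution of $(\eff^n)^*(\delta_z)=(F^n)_*(\delta_z)$, which is the \emph{wrong} direction. Instead, I would work directly with the operator $\Lam$ from \eqref{E:pullbackDirac2}--\eqref{E:pullbackDirac3}: for $\varphi\in\smoo(\pro)$, $\langle d_1(F)^{-n}(F^n)^*(\delta_z),\varphi\rangle = d_1(F)^{-n}\Lam^n[\varphi](z)$, where $\Lam^n$ is the $n$-fold composite built from $\corr^{\circ n}$. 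So the theorem amounts to showing that $d_1(F)^{-n}\Lam^n[\varphi]$ converges pointwise on $U(F,\rep)$ (and that the limit is $\langle\mu_F,\varphi\rangle$, a constant).

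The key steps, in order. \emph{Step 1.} Use the strong-repeller hypothesis to construct the neighbourhood $U(F,\rep)$: by Definition~\ref{D:strongRep} applied to $\eff$, there is $a_0\in\rep$, an open $U\supset\rep$, and for each $w\in U$ a sequence of ``inverse branches'' $a_n(w)\in(\eff)^n(w)=\pi_1(\pi_2^{-1}\{w\}\cap|\corr^{\circ n}|)$ with $a_n(w)\to a_0$. Since $\rep$ avoids the critical values of $F$, these branches can be realised as genuine holomorphic maps $w\mapsto a_n(w)$ on a possibly smaller neighbourhood — this is where disjointness from critical values is essential, as it guarantees that $\pi_2$ restricted to the relevant germs of $|\corr|$ (hence of $|\corr^{\circ n}|$) is a local biholomorphism, so the selected points vary holomorphically. \emph{Step 2.} Set up a Green's-function / harmonic-conjugate argument: fix $\varphi=\log|z-\zeta|$-type potentials, or more cleanly fix a suitable smooth $\varphi$ and study the difference $\Lam^n[\varphi](z)-\Lam^n[\varphi](z')$ for $z,z'\in U(F,\rep)$. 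Writing out $\Lam^n[\varphi](z)=\sum_{\text{paths}}(\text{mult})\,\varphi(z_n)$ over length-$n$ paths from $z$, I would pair each path over $z$ with a path over $z'$ sharing the same combinatorial type, and estimate $|\varphi(z_n)-\varphi(z'_n)|$. \emph{Step 3.} The quantitative heart: show the number of length-$n$ paths is $d_1(F)^n$ (from the definition of $d_1$), and that $\eff$-contraction of the inverse branches toward $a_0$ forces the paired endpoints $z_n,z'_n$ to be close in a way that sums to something $o(d_1(F)^n)$, giving $d_1(F)^{-n}\big(\Lam^n[\varphi](z)-\Lam^n[\varphi](z')\big)\to 0$ uniformly on compacts of $U(F,\rep)$. \emph{Step 4.} Promote pointwise asymptotic constancy to existence of the limit measure: by a normal-families / compactness argument on the sequence of probability measures $d_1(F)^{-n}(F^n)^*(\delta_z)$ and the asymptotic independence of $z$, extract a common weak${}^*$ limit $\mu_F$; then $F^*$-invariance $F^*(\mu_F)=d_1(F)\mu_F$ follows by applying $\Lam$ to the defining relation and passing to the limit, using continuity of $\Lam$ on $\smoo(\pro)$ noted after \eqref{E:pullbackDirac2}.

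The main obstacle I anticipate is \emph{Step 3} — converting the topological/set-theoretic contraction supplied by McGehee's attractor formalism into a genuine \emph{metric} contraction rate strong enough to beat the multiplicity $d_1(F)^n$ in the sum. McGehee's $\omega$-limit machinery gives that $(\eff)^n(U)$ eventually lands in any neighbourhood of $\rep$, but a priori only one selected branch $a_n(w)$ is controlled, whereas $\Lam^n[\varphi]$ sums over \emph{all} $d_1(F)^n$ preimage-branches of $F$, most of which need not contract. The resolution must exploit that the total mass is exactly $d_1(F)^n$ and renormalise, so that the ``bad'' branches, while not individually contracting, contribute a bounded-mass error after division by $d_1(F)^n$; making this precise — likely via an inductive estimate on $\Lam^n$ combined with a pluripotential-theoretic $dd^c$-argument showing $d_1(F)^{-n}\Lam^n[\varphi]$ is asymptotically pluriharmonic, hence constant on the relevant component — is the delicate part, and is presumably where the detailed book-keeping of admissible paths from Section~\ref{SS:normality} and the structure of $U(F,\rep)$ enter.
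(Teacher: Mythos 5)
Your overall framing is right --- reduce to the operator $\Lam$, use the attractor-block structure around $\rep$ coming from McGehee, use disjointness from critical values to get holomorphic inverse branches, and count $d_1(F)^n$ branches --- and this matches the paper's setup. But Step~3, which you correctly flag as the heart of the matter, is a genuine gap, and the difficulty you pose there is not the one the paper actually has to solve. You are trying to prove directly that $d_1(F)^{-n}\bigl(\Lam^n[\varphi](z)-\Lam^n[\varphi](z')\bigr)\to 0$ for fixed $z,z'$, i.e.\ asymptotic constancy, by extracting a metric contraction rate for the inverse branches strong enough to beat the multiplicity $d_1(F)^n$. No such rate is available from the hypotheses (only one selected branch $a_n(w)$ is controlled, as you note), and your proposed fallback --- a $dd^c$/pluriharmonicity argument --- is not developed and is not what makes the proof work.

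The paper splits the problem into two much weaker claims, neither of which needs a contraction rate. First, \emph{equicontinuity} (not asymptotic constancy) of $\{\PerrF_B^n[\varphi]\}_n$: all $d_1(F)^n$ regular inverse branches over a point of the attractor block $B$ map into a fixed relatively compact planar region $W$, so by Montel they form a normal family; equicontinuity of the averages follows by the triangle inequality, precisely because the sum is divided by its exact cardinality $d_1(F)^n$. Second, the passage from equicontinuity to convergence to a \emph{constant} is functional-analytic: Lyubich's theorem (Result~\ref{R:lyubich_Abs}) reduces it to showing that the unitary spectrum of $\PerrF_B$ is $\{1\}$ with only constant eigenfunctions, and \emph{this} is where the strong-repeller sequence $a_n(w)\to a_0$ enters --- via a maximum-modulus argument at a maximiser of $|\varphi_\lambda|$, forcing $\varphi_\lambda$ to take the value $\lambda^n\varphi_\lambda(\zt_*)$ along a convergent orbit, hence $\lambda=1$ and $\varphi_\lambda\equiv const$. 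Your Step~4 compactness extraction also only yields subsequential limits, whereas Lyubich's result gives convergence of the full sequence. Without these two ingredients your argument does not close.
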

\smallskip

\noindent{It may seem to the reader that \eqref{E:equi} could be stronger, since the theorem
does not state that $\pro\setminus U(F,\rep)$ is polar (or even nowhere dense). However,
given that $d_0(F)\geq d_1(F)$, this is very much in the nature of things. In this regard, we
make the following remark.}

\begin{remark}\label{Rem:care}
If $F$ is as in Theorem~\ref{T:d_1less}, we {\em cannot conclude, in general,
that the set $\pro\setminus U(F,\rep)$ is polar.} The following example constitutes
a basic obstacle to $\pro\setminus U(F,\rep)$ being even nowhere dense. 
Let $P$ be any polynomial whose filled Julia set has non-empty
interior. Consider the holomorphic correspondence $F$ determined by
\[
 \corr^P \ := \ \text{the completion in $\pro\times\pro$ of the zero set of $(z_1-P(z_2))$.}
\]
Here, $d_0(F)=\deg(P)> 1= d_1(F)$. Note that $\{\infty\}$ is a strong repeller. However,
$U(F,\{\infty\})$ cannot contain any points from the filled Julia set of $P$.
\end{remark}

From the perspective of studying the problem of equidistribution of {\em inverse}
images, Theorem~\ref{T:d_1less} is, to the best of our knowledge, the first theorem
concerning the equidistribution problem for holomorphic correspondences $F$ such
that $d_0(F)\geq d_1(F)$.
\smallskip

We must point out that, in general, given $d_1(F) > d_0(F)$, the Dinh--Sibony measure
is {\em not} an invariant measure. (It is what one calls an $F^*$-invariant measure.)
A Borel measure $\mu$ is said to be {\em invariant under $F$}
if its push-forward by $F$ preserves (compensating for multiplicity if $F$ is not a map) measure of
all Borel sets --- i.e., if $F_*\mu = d_0(F)\mu$. Thus, under the condition $d_0(F)\geq d_1(F)$,
we can actually construct measures that are invariant under $F$:

\begin{corollary}\label{C:d1_less}
Let $F$ be a holomorphic correspondence on $\pro$ for which $d_0(F)\geq d_1(F)$.
\begin{enumerate}
 \item[$i)$] If $d_0(F) > d_1(F)$, there exists a measure $\mu_F$ that is invariant
 under $F$.
 \item[$ii)$] Suppose $d_0(F) = d_1(F)$. If $F$ has a strong attractor that is disjoint
 from the critical values of $\eff$, then there exists a measure $\mu_F$ that is invariant
 under $F$.
\end{enumerate}
\end{corollary}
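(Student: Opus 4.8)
The plan is to obtain both parts by transferring results already in hand to the transpose correspondence $\eff$, which on $\pro$ is again a bona fide holomorphic correspondence (on a Riemann surface every meromorphic correspondence is holomorphic). Everything hinges on two degree identities,
\begin{equation*}
 d_1(\eff) \ = \ d_0(F), \qquad\qquad d_0(\eff) \ = \ d_1(F),
\end{equation*}
together with the definitional identity $F_* := (\eff)^*$. The first identity is immediate from $d_1(\eff) = d_{top}(\eff)$ and the definition $d_0(F) := d_{top}(\eff)$. For the second, note that transposing $\acorr$ term by term returns $\corr$, so the transpose of $\eff$ is $F$; hence $d_0(\eff) = d_{top}(\text{transpose of }\eff) = d_{top}(F) = d_1(F)$. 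Finally, the discussion following \eqref{E:pullback} applies verbatim to $\eff$, so $(\eff)^*$ acts on finite positive Borel measures; thus $F_*(\mu) = (\eff)^*(\mu)$ is defined for every such $\mu$, and $\mu$ is invariant under $F$ precisely when $(\eff)^*(\mu) = d_0(F)\,\mu$.

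For $(i)$: the hypothesis $d_0(F) > d_1(F)$ says exactly that $d_0(\eff) < d_1(\eff)$, so Fact~\ref{L:clarify} applies to $\eff$ and produces its Dinh--Sibony measure $\mu_{\eff}$ with $(\eff)^*(\mu_{\eff}) = d_1(\eff)\,\mu_{\eff}$. Since $d_1(\eff) = d_0(F)$ and $(\eff)^* = F_*$, this reads $F_*(\mu_{\eff}) = d_0(F)\,\mu_{\eff}$, so $\mu_F := \mu_{\eff}$ is invariant under $F$.

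For $(ii)$: now $d_0(F) = d_1(F)$ gives $d_0(\eff) = d_1(\eff)$, so in particular $d_0(\eff) \geq d_1(\eff)$, and the goal is to invoke Theorem~\ref{T:d_1less} for $\eff$. It remains to check that the hypothesis of that theorem for $\eff$ is precisely the one stated here. By Definition~\ref{D:repeller}, a repeller for $\eff$ is an attractor for the relation obtained by transposing $\acorr$, i.e. for $|\corr|$ --- which is exactly an attractor for $F$; and the word ``strong'' carries over verbatim, because the sequences occurring in Definition~\ref{D:strongRep} live in the iterated images of $w$ under the transpose of $\eff$, i.e. in $F^n(w)$. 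So ``$\eff$ has a strong repeller $\mathcal{A}$'' unwinds exactly to ``$F$ has a strong attractor $\mathcal{A}$'', and the exceptional set named in both formulations is the set of critical values of $\eff$, so there is nothing further to reconcile. Theorem~\ref{T:d_1less} applied to $\eff$ then yields a probability measure $\mu_{\eff}$ with $(\eff)^*(\mu_{\eff}) = d_1(\eff)\,\mu_{\eff} = d_0(F)\,\mu_{\eff}$, i.e. $F_*(\mu_{\eff}) = d_0(F)\,\mu_{\eff}$, so $\mu_F := \mu_{\eff}$ again works.

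I do not expect a genuine obstacle here: the corollary is essentially a formal consequence of Fact~\ref{L:clarify} and Theorem~\ref{T:d_1less}. The only steps that demand care are the two degree identities and the translation dictionary ``strong repeller of $\eff$'' $\leftrightarrow$ ``strong attractor of $F$'' carrying the same critical-value set --- and, to a lesser extent, confirming that $(\eff)^*$ is legitimately defined on finite positive Borel measures so that the identity $F_* = (\eff)^*$ may be used. Once these are in place, $(i)$ and $(ii)$ are direct transcriptions of the two cited results.
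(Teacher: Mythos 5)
Your proposal is correct and follows exactly the paper's own route: both parts are obtained by applying Fact~\ref{L:clarify} (for $(i)$) and Theorem~\ref{T:d_1less} (for $(ii)$) to $\eff$, using $F_* = (\eff)^*$ and the degree/attractor--repeller dictionary under transposition. You merely spell out the bookkeeping that the paper leaves implicit.
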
 

The proof of Theorem~\ref{T:d_1less} relies on techniques developed by Lyubich in 
\cite{lyubich:epreRs83}. Given our hypothesis on the existence of a repeller $\rep$, one
can show that there exists a compact set $B$ such that $\rep\subset B^\circ$ and
$\eff(B)\subset B$. This allows us to define a Perron--Frobenius-type operator
$\PerrF_B:\smoo(B;\cplx)\lrarw \smoo(B;\cplx)$, where
\[
 \PerrF_B \ := \ \frac{1}{d_1(F)}\left.\Lam\right|_B,
\]
with $\left.\Lam\right|_B$ being the operator given by \eqref{E:pullbackDirac2} with $B$
replacing $\Omega$. Our proof relies on showing
that the family $\{\PerrF_B^n:n=1,2,3,\dots\}$ satisfies the conditions of the main result
in \cite[\S{2}]{lyubich:epreRs83} (which provides us a candidate for $\mu_F$).
This goal is achieved, {\em in part}, by showing that
there exists an open neighbourhood $W$ of the $B$ above such that $\eff(W)\subset W$,
and that around each $z\in W$ and for each $n\in \zahl_+$, there are $d_1(F)^n$
holomorphic branches (counting according to multiplicity) of $n$-fold iteration of the
correspondence $\eff$.
\smallskip

The examples in Remark~\ref{Rem:care} have some very
particular features. One might ask whether there are plenty of holomorphic correspondences $F$ on
$\pro$ with $d_0(F)\geq d_1(F)$ --- {\em without} such special features as in the correspondences
in Remark~\ref{Rem:care} --- that satisfy the conditions stated in
Theorem~\ref{T:d_1less}. One might also ask whether any of the correspondences alluded to in
Section~\ref{S:intro} satisfy the conditions in Theorem~\ref{T:d_1less}. 
The reader is referred to Section~\ref{S:example} concerning these questions.
In the next section, we shall establish a few technical facts which will be of relevance throughout
this paper. The proofs of our theorems will be provided in Sections~\ref{S:proof-d_1more} and
\ref{S:proof-d_1less}.
\medskip

\section{Technical propositions}\label{S:basic} 

We begin by showing that the composition of two holomorphic correspondences on $\pro$, under the 
composition rule \eqref{E:compos2}, produces a holomorphic correspondence. 
\smallskip

One way to see this is to begin with how one computes $F_2\circ F_1$ if
one is given exact expressions for $F_1$ and $F_2$. 
Let $\corr^s$ be the graph of $F_s$, $s=1,2$, and consider the representations given
by \eqref{E:alt}. Fix indices $j$ and $l$ such that $1\leq j\leq L_1$ and
$1\leq l\leq L_2$. It follows that there exist irreducible polynomials $P_1, P_2\in \cplx[z,w]$ such
that
\[
 \IVar_{1,\,j}\cap\CC = \{(z,w)\in \CC: P_1(z,w)=0\}, \quad \IVar_{2,\,l}\cap\CC = \{(z,w)\in \CC: 
P_2(z,w)=0\};
\]
see, for instance, \cite[pp.\,23-24]{shafarevich:bagI94}. Now, given any polynomial $P\in \cplx[z,w]$,
set
\begin{align}
 {\rm supp}(P) \ &:= \ \{(\alpha,\beta)\in \nat^2:\bdy^\alpha_z\bdy^\beta_w P(0,0)\neq 0\}, \notag \\
 d_z(P) \ &:= \ \max\{\alpha\in \nat: (\alpha,\beta)\in {\rm supp}(P) \}, \notag \\
 d_w(P) \ &:= \ \max\{\beta\in \nat: (\alpha,\beta)\in {\rm supp}(P) \}. \notag   
\end{align}
Then, there is a choice of projective coordinates on $\pro$ such that
\begin{align}
 \IVar_{1,\,j} \ = \ \{([z_0:z_1],[w_0:w_1])\in \pro\times\pro: 
		z_0^{d_z(P_1)}w_0^{d_w(P_1)}P_1(z_1/z_0,w_1/w_0)=0\}, \notag \\
 \IVar_{2,\,l} \ = \ \{([z_0:z_1],[w_0:w_1])\in \pro\times\pro: 
                z_0^{d_z(P_2)}w_0^{d_w(P_2)}P_2(z_1/z_0,w_1/w_0)=0\}. \notag
\end{align}
With these notations, we are in a position to state our first proposition.

\begin{proposition}\label{P:noLines}
Let $\IVar_{1,\,j}$ and $\IVar_{2,\,l}$ be irreducible subvarieties belonging to the 
holomorphic $1$-chains $\corr^1$ and $\corr^2$ respectively. Let $P_1$ and $P_2$ be the
defining functions of $\IVar_{1,\,j}\cap\CC$ and $\IVar_{2,\,l}\cap\CC$ respectively.
\begin{enumerate}
 \item[$i)$] Let $R(z,w):={\sf Res}(P_1(z,\bcdot),P_2(\bcdot,w))$, where ${\sf Res}$ denotes
 the resultant of two univariate polynomials. Let $V_R$ denote the biprojective completion
 in $\pro\times\pro$ of $\{(z,w)\in\CC:R(z,w)=0\}$. Then $|\IVar_{2,\,l}\circ\IVar_{1,\,j}|=V_R$.
 \item[$ii)$] $V_R$ has no irreducible components of the form $\{a\}\times\pro$ or
 $\pro\times\{a\}$, $a\in \pro$.
\end{enumerate}
\end{proposition}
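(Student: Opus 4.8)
The plan is to prove both parts by passing to the affine chart $\CC$ and using classical properties of the resultant, then understanding the behaviour at infinity via the leading forms of $P_1$ and $P_2$.

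For part $(i)$: By definition \eqref{E:compos1}, $|\IVar_{2,\,l}\circ\IVar_{1,\,j}|$ is the biprojective completion of the set of $(z,w)\in\CC$ for which there exists $y\in\cplx$ with $P_1(z,y)=0$ and $P_2(y,w)=0$ (plus possibly points at infinity, which I handle below). The standard elimination-theory fact is that, for fixed $(z,w)$, the two univariate polynomials $P_1(z,\bcdot)$ and $P_2(\bcdot,w)$ have a common root in $\cplx$ exactly when their resultant $R(z,w) = {\sf Res}(P_1(z,\bcdot),P_2(\bcdot,w))$ vanishes — provided neither polynomial degenerates (i.e., provided the leading coefficients in $y$ do not both vanish). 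So away from the finitely many values of $z$ (resp. $w$) killing the relevant leading coefficients, $\{R=0\}\cap\CC$ coincides with the affine part of $|\IVar_{2,\,l}\circ\IVar_{1,\,j}|$. First I would check that the exceptional locus where a leading coefficient vanishes is a proper subvariety, so that both sides, being closures of their restrictions to a Zariski-dense open set, agree on $\CC$; then I would argue that taking biprojective completions of equal affine sets gives equal sets in $\pro\times\pro$, which yields $|\IVar_{2,\,l}\circ\IVar_{1,\,j}| = V_R$. Care is needed because $R$ could in principle be identically zero; this is ruled out since $\IVar_{1,\,j}$, being irreducible and not (by the conventions set up for correspondences, or by the surjectivity of both projections in Definition~\ref{D:holCorr}) of the form $\cplx\times\{a\}$, makes $P_1(z,\bcdot)$ a nonzero polynomial for generic $z$, and similarly for $P_2$, so the resultant is a nonzero polynomial in $(z,w)$.

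For part $(ii)$: Suppose for contradiction that $\pro\times\{a\}$ is an irreducible component of $V_R$. If $a\neq\infty$ this forces $R(z,a)\equiv 0$ as a polynomial in $z$, i.e., for every $z$ the polynomials $P_1(z,\bcdot)$ and $P_2(\bcdot,a)$ share a root. Now $P_2(\bcdot,a)$ is a fixed nonzero univariate polynomial (nonzero because $\IVar_{2,\,l}$ has no component $\cplx\times\{a\}$), so it has finitely many roots $y_1,\dots,y_r$; hence for every $z$, $P_1(z,\cdot)$ vanishes at one of the $y_i$, so by pigeonhole $P_1(z,y_i)\equiv 0$ for some fixed $i$, meaning $\{y = y_i\}$, i.e. $\cplx\times\{y_i\}$, lies in $\IVar_{1,\,j}$ — contradicting that $\IVar_{1,\,j}$ is an irreducible correspondence variety with both projections surjective (or, concretely, the standing hypothesis that no component has the form $\cplx\times\{a\}$). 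A symmetric argument handles components of the form $\{a\}\times\pro$, using that $P_1(z,\bcdot)$ nonvanishing identically forces $P_2$ to have no component $\{a\}\times\cplx$. The remaining cases are the "lines at infinity" $\pro\times\{\infty\}$ and $\{\infty\}\times\pro$: here I would examine the homogenizations written just before the proposition and the behaviour of the resultant's top-degree coefficients in $z$ and in $w$ — specifically, $\{\infty\}\times\pro\subset V_R$ would mean $d_z(R)$ drops, which one computes from the leading ($z$-degree) parts of $P_1$; the hypotheses that $\IVar_{1,\,j}$ and $\IVar_{2,\,l}$ are genuine correspondences again prevent this.

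The main obstacle I anticipate is the bookkeeping at infinity: making precise the claim that the biprojective completion of $\{R=0\}\cap\CC$ has no components in either line at infinity, and that this reflects exactly the non-degeneracy (surjectivity of $\pi_1,\pi_2$) of $\IVar_{1,\,j}$ and $\IVar_{2,\,l}$. The affine part — the resultant computing existence of a common root — is classical and routine; the interaction between homogenization and the resultant (whether ${\sf Res}$ of the homogenized polynomials equals the homogenization of ${\sf Res}$, up to harmless factors of the "infinity" coordinates) is the delicate point and is where I would spend the real effort, most cleanly by reducing to the statement that $R\not\equiv 0$ together with the degree count $d_z(R), d_w(R)\geq 1$, both of which follow once one knows the generic fibres of $\pi_1|_{\IVar_{s}}$ and $\pi_2|_{\IVar_{s}}$ are finite and nonempty.
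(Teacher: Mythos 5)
Your proposal is correct and follows essentially the same route as the paper: part $(i)$ via the classical common-root/resultant criterion on the affine chart $\CC$ followed by biprojective completion, and part $(ii)$ by showing that a line component forces $R$ to vanish identically on a line, whence a pigeonhole argument over the finitely many roots of the frozen polynomial produces a horizontal or vertical line inside $\IVar_{1,\,j}$ or $\IVar_{2,\,l}$, contradicting surjectivity of the projections, with the lines at infinity handled by the same chart-switching/homogenization idea the paper uses. The only immaterial difference is that you fix $w=a$ and pigeonhole over $z$, whereas the paper fixes $z=a$ and pigeonholes over $w$.
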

\begin{proof} Let us write $V:=|\IVar_{2,\,l}\circ\IVar_{1,\,j}|$. Since two polynomials 
$p,q\in \cplx[X]$ have a common zero if and only if ${\sf Res}(p,q)=0$,
\[
 V\cap\CC \ = \ \{(z,w)\in \CC: {\sf Res}(P_1(z,\bcdot),P_2(\bcdot,w))=0\}.
\]
Hence, as $V$ is the biprojective completion of $V\cap\CC$ in $\pro\times\pro$, $(i)$ follows.
\smallskip

To prove $(ii)$, let us first consider the case when $a\neq [0:1]$. Then, it suffices to
show that $R$ has no factors of the form $(z-a)$ or $(w-a)$. We shall show that
$R$ has no factors of the form $(z-a)$. An analogous argument will rule
out factors of the form $(w-a)$. To this end, assume that there exists
an $a\in \cplx$ such that $(z-a)|R$ in $\cplx[z,w]$. This implies
\[
 R(a,w) \ = \ 0 \;\; \forall w\in \cplx.
\]
Thus, for each $w\in \cplx$, the polynomial $P_2(\bcdot,w)$ has a zero in common with
$p_1:=P_1(a,\bcdot)\in \cplx[X]$. Note that $p_1\not\equiv 0$ because, otherwise,
$(z-a)|P_1$, which would contradict the fact that $\left.\pi_s\right|_{\IVar_{1,\,j}}$ is surjective,
$s=1,2$. Thus, there exists an uncountable set $S\subset \cplx$ and a point $b \in p_1^{-1}\{0\}$
such that
\[
 P_2(b,w) \ = \ 0 \;\; \forall w\in S.
\]
But this implies $P_2(b,\bcdot)\equiv 0$, i.e. that $(z-b)|P_2$. This is impossible, for
exactly the same reason that $(z-a)\!\nmid\!P_1$. Hence $R$ has no factors of the form 
$(z-a), \ a\in\cplx$.
\smallskip

Note that, if we write $\cplx^\prime:=\{[z_0:z_1]\in \pro:z_1\neq 0\}$, then, arguing as in the beginning
of this proof, 
\[
 V\cap(\cplx^\prime\times\cplx) \ = \ 
\{(z,w)\in \CC: z^{d_z(R)}{\sf Res}(P_1(1/z,\bcdot),P_2(\bcdot,w))=0\},
\]
where $d_z(R)$ is as defined in the beginning of this section. If we define $\widetilde{R}\in \cplx[z,w]$ by
\[
 \widetilde{R}(z,w) \ := \ z^{d_z(R)}{\sf Res}(P_1(1/z,\bcdot),P_2(\bcdot,w)),
\]
then we get $z\!\nmid\!\widetilde{R}$ in $\cplx[z,w]$. Thus,
$\{[0:1]\}\times\pro$ is not an irreducible component of $V$. By a similar argument,
$\pro\times\{[0:1]\}$ is not an irreducible component of $V$ either.
\end{proof}

It is now easy to see that $\corr^2\circ\corr^1$ determines a holomorphic correspondence on $\pro$.
Let us pick $\IVar_{1,\,j}$ and $\IVar_{2,\,l}$ as in Proposition~\ref{P:noLines}
and let $C$ be an irreducible component of $|\IVar_{2,\,l}\circ\IVar_{1,\,j}|$. By the fundamental
theorem of algebra, $\left.\pi_s\right|_C$ would fail to be surjective for some $s\in \{1, 2\}$ only
if $C$ is of the form $\{a\}\times\pro$ or $\pro\times\{a\}$, $a\in \pro$. This is impossible by
Part~$(ii)$ of Proposition~\ref{P:noLines}. Hence, we have the following: 
  
\begin{corollary}\label{C:allOkay}
Let $\corr^1$ and $\corr^2$ be two holomorphic correspondences on $\pro$. Then
$\corr^2\circ\corr^1$ is a holomorphic correspondence on $\pro$.
\end{corollary}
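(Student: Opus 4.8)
The plan is to reduce the statement to surjectivity of the two coordinate projections on each irreducible component of the relevant support, and then to invoke the fact recorded in Section~\ref{S:results} that on a compact Riemann surface every meromorphic correspondence is automatically holomorphic. By the composition rule \eqref{E:compos2} one has $\corr^2\circ\corr^1 = \sum_{j=1}^{L_1}\sum_{l=1}^{L_2}\IVar_{2,\,l}\circ\IVar_{1,\,j}$, and by construction each summand $\IVar_{2,\,l}\circ\IVar_{1,\,j}$ is a holomorphic $1$-chain $\sum_s\nu_{s,\,jl}Y_{s,\,jl}$ with $\nu_{s,\,jl}\in\zahl_+$; since a finite sum of holomorphic $1$-chains is again one of the form \eqref{E:stdForm}, $\corr^2\circ\corr^1$ is a holomorphic $1$-chain. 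So the only thing that needs checking is that, for every pair $(j,l)$ and every irreducible component $C$ of $|\IVar_{2,\,l}\circ\IVar_{1,\,j}|$, both $\left.\pi_1\right|_C$ and $\left.\pi_2\right|_C$ are surjective.

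This is exactly where I would lean on Proposition~\ref{P:noLines}. Fixing $j,l$ and letting $P_1,P_2$ be the defining polynomials of $\IVar_{1,\,j}\cap\CC$ and $\IVar_{2,\,l}\cap\CC$, Part~$(i)$ gives $|\IVar_{2,\,l}\circ\IVar_{1,\,j}| = V_R$, the biprojective completion of the resultant curve. If some irreducible component $C$ of $V_R$ had $\left.\pi_1\right|_C$ non-surjective, then $\pi_1(C)$ would be a proper Zariski-closed, hence finite, subset of $\pro$; irreducibility of $C$ would force $\pi_1(C)=\{a\}$, and --- $C$ being a curve in the surface $\pro\times\pro$ --- this forces $C=\{a\}\times\pro$. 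The symmetric argument handles $\left.\pi_2\right|_C$ and produces $C=\pro\times\{a\}$. Part~$(ii)$ of Proposition~\ref{P:noLines} rules out both possibilities, so every component of every $V_R$ surjects onto each factor; hence $\corr^2\circ\corr^1$ determines a meromorphic correspondence of $\pro$ onto itself.

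To close, I would observe that since $\pro$ is a compact Riemann surface this meromorphic correspondence is automatically holomorphic; concretely, on the desingularization of any component $C$ the map induced by $\left.\pi_1\right|_C$ is a non-constant holomorphic map of compact Riemann surfaces, hence proper with finite fibres, so $F_{\corr^2\circ\corr^1}(x)$ is a finite set for every $x\in\pro$. I do not anticipate a real obstacle at the level of the corollary itself: everything delicate --- in particular the elimination of ``vertical'' and ``horizontal'' components --- has already been absorbed into Proposition~\ref{P:noLines} via the resultant computation, and the residual inputs are the elementary facts that a proper closed algebraic subset of $\pro$ is finite and that an irreducible curve in $\pro\times\pro$ on which one coordinate is constant must be a full fibre.
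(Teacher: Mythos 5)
Your proposal is correct and follows essentially the same route as the paper: the paper also deduces the corollary directly from Part~$(ii)$ of Proposition~\ref{P:noLines}, noting that a projection restricted to an irreducible component of $|\IVar_{2,\,l}\circ\IVar_{1,\,j}|$ can fail to be surjective only if that component is of the form $\{a\}\times\pro$ or $\pro\times\{a\}$, which the proposition forbids. Your additional remarks --- that the image of a component under a projection is Zariski-closed and hence finite if proper, and that a meromorphic correspondence of a compact Riemann surface is automatically holomorphic --- merely spell out steps the paper leaves implicit.
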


The next lemma will be useful in simplifying expressions of the form 
$(F^n)^*\delta_z$ or $(F^n)_*\delta_z$. Its proof is entirely routine, so we shall leave the
proof as an exercise.

\begin{lemma}\label{L:iterate}
Let $X$ be a compact complex manifold and let $F$ be a holomorphic correspondence
on $X$. Then $\trp{(F^n)}=(\eff)^n \ \forall n\in \zahl_+$.
\end{lemma}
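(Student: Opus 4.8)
The plan is to reduce the whole statement to a single algebraic identity at the level of holomorphic $k$-chains,
\[
 \trp{(\corr^2\circ\corr^1)} \ = \ \trp{\corr^1}\circ\trp{\corr^2},
\]
valid for any two holomorphic $k$-chains $\corr^1,\corr^2$ determining correspondences on $X$ (the \emph{anti-homomorphism property} of the transpose), and then to iterate. Note first that $\corr\mapsto\trp{\corr}$ is an involution and that, at the level of relations, $|\trp{\corr}|=|\corr|^*:=\{(y,x):(x,y)\in|\corr|\}$; in particular $\trp{(F^n)}=F_{\trp{(\corr^{\circ n})}}$ and $(\eff)^n=F_{(\acorr)^{\circ n}}$, so the lemma is equivalent to the chain identity $\trp{(\corr^{\circ n})}=(\acorr)^{\circ n}$ for all $n\in\zahl_+$.

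\emph{Step 1 (the anti-homomorphism property).} Fix irreducible components $\IVar_{1,j}$, $\IVar_{2,l}$ occurring in the representations \eqref{E:alt} of $\corr^1$, $\corr^2$. By \eqref{E:compos1}, a pair $(z,x)$ lies in $|\trp{(\IVar_{2,l}\circ\IVar_{1,j})}|$ precisely when there is a $y$ with $(x,y)\in\IVar_{1,j}$ and $(y,z)\in\IVar_{2,l}$, equivalently when there is a $y$ with $(z,y)\in\trp{\IVar_{2,l}}$ and $(y,x)\in\trp{\IVar_{1,j}}$, i.e. precisely when $(z,x)\in|\trp{\IVar_{1,j}}\circ\trp{\IVar_{2,l}}|$. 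Thus the two $k$-chains $\trp{(\IVar_{2,l}\circ\IVar_{1,j})}$ and $\trp{\IVar_{1,j}}\circ\trp{\IVar_{2,l}}$ have the same irreducible components, namely the $\trp{Y_{s,jl}}$. Moreover, by the definition of the coefficients following \eqref{E:compos1}, the multiplicity attached to $\trp{Y_{s,jl}}$ is in either case the generic number of intermediate points $y$ satisfying the (literally identical) membership conditions above as $(x,z)$, equivalently $(z,x)$, varies over $Y_{s,jl}$; hence the multiplicities agree and $\trp{(\IVar_{2,l}\circ\IVar_{1,j})}=\trp{\IVar_{1,j}}\circ\trp{\IVar_{2,l}}$ as $k$-chains. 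Since $\trp{\corr^s}=\Add{1\leq j\leq L_s}\trp{\IVar_{s,j}}$ is exactly the alternative representation \eqref{E:alt} of $\trp{\corr^s}$, summing over $j$ and $l$ and applying \eqref{E:compos2} twice yields the displayed identity.

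\emph{Step 2 (associativity).} In the same spirit one checks that $\circ$ is associative on holomorphic $k$-chains determining correspondences: $(\corr^3\circ\corr^2)\circ\corr^1$ and $\corr^3\circ(\corr^2\circ\corr^1)$ have the same support, since $\star$ is associative on relations (use \eqref{E:classic}, \eqref{E:impo}), and the multiplicity on each irreducible component of the common support is, in either grouping, the generic number of pairs $(y_1,y_2)$ with $(x,y_1)\in\corr^1$, $(y_1,y_2)\in\corr^2$, $(y_2,w)\in\corr^3$. In particular $\corr^{\circ(n+1)}=\corr^{\circ n}\circ\corr=\corr\circ\corr^{\circ n}$, so the iterate is unambiguously defined. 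Now induct on $n$: for $n=1$ the identity $\trp{\corr}=\acorr$ is the definition, and assuming $\trp{(\corr^{\circ n})}=(\acorr)^{\circ n}$ we get, using Step~2, Step~1, and the induction hypothesis,
\[
 \trp{(\corr^{\circ(n+1)})} \ = \ \trp{(\corr^{\circ n}\circ\corr)} \ = \ \trp{\corr}\circ\trp{(\corr^{\circ n})} \ = \ \acorr\circ(\acorr)^{\circ n} \ = \ (\acorr)^{\circ(n+1)}.
\]
Passing back to correspondences via $\trp{(F^n)}=F_{\trp{(\corr^{\circ n})}}$ and $(\eff)^n=F_{(\acorr)^{\circ n}}$ finishes the proof.

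\emph{Anticipated obstacle.} There is no genuine difficulty here — the content is purely the bookkeeping of multiplicities. The one point that must be checked carefully is that transposition (swapping source and target) and re-association of compositions really do leave the generic fibre/pair counts $\nu_{s,jl}$, and the coefficients $m_j$, unchanged; once the anti-homomorphism property and associativity are established, the induction is immediate. This is why the paper relegates it to a routine exercise.
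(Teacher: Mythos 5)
Your proof is correct; note that the paper deliberately omits any argument here ("entirely routine... left as an exercise"), so there is no authorial proof to diverge from, and what you give is precisely the intended routine argument. You correctly identify the two points that actually need checking — that transposition is an anti-homomorphism for $\circ$ at the level of chains (same irreducible components under the swap $(x,z)\mapsto(z,x)$, same generic counts of intermediate points $y$, hence same coefficients $\nu_{s,\,jl}$) and that $\circ$ is associative so that $\acorr\circ(\acorr)^{\circ n}=(\acorr)^{\circ n}\circ\acorr$ — and the induction then closes as you state.
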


The final result in this section is important because it establishes that the measures
$d_1(F)^{-n}(F^n)^*(\delta_z)$ appearing in Theorem~\ref{T:d_1less}
are probability measures. 

\begin{proposition}\label{P:degreeSeq}
Let $F$ be a holomorphic correspondence on $\pro$. Then $d_{top}(F^n)=d_{top}(F)^n \ 
\forall n\in \zahl_+$.
\end{proposition}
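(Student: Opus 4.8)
The plan is to induct on $n$, the base case $n=1$ being trivial. The key identity is $F^{n+1} = F_{\corr^{\circ(n+1)}}$ with $\corr^{\circ(n+1)} = \corr\circ\corr^{\circ n}$, so I want to reduce the claim to a single statement about composition: if $\corr^1$ and $\corr^2$ are holomorphic $1$-chains determining holomorphic correspondences on $\pro$, then $d_{top}(F_{\corr^2\circ\corr^1}) = d_{top}(F_{\corr^2})\,d_{top}(F_{\corr^1})$. Granting this multiplicativity of $d_{top}$ under composition, the induction is immediate: $d_{top}(F^{n+1}) = d_{top}(F_\corr)\,d_{top}(F_{\corr^{\circ n}}) = d_1(F)\cdot d_1(F)^n = d_1(F)^{n+1}$.

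\textbf{Proving multiplicativity under composition.} First I would reduce to the case where $\corr^1 = \IVar_{1,j}$ and $\corr^2 = \IVar_{2,l}$ are single irreducible subvarieties, since $d_{top}$ is additive over the constituents of a $1$-chain (with multiplicities), and the composition rule \eqref{E:compos2} is bilinear over the primed decompositions \eqref{E:alt}; so it suffices to check that the number of preimages of a generic point under $F_{\IVar_{2,l}\circ\IVar_{1,j}}$, counted with the multiplicities $\nu_{s,jl}$ from \eqref{E:compos1}, equals the product of the corresponding generic preimage-counts. Here I would invoke Proposition~\ref{P:noLines}$(i)$: $|\IVar_{2,l}\circ\IVar_{1,j}| = V_R$, where $R(z,w) = {\sf Res}(P_1(z,\bcdot), P_2(\bcdot,w))$. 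The degree of $R$ in $z$ is what controls $d_{top}$ of the composed correspondence. Using standard properties of the resultant, $\deg_z {\sf Res}(P_1(z,\bcdot), P_2(\bcdot,w)) = d_w(P_1)\cdot d_z(P_2) + (\text{lower-order bookkeeping from the }w\text{-degrees})$ — more precisely, $R$ is, up to the factorisation reflecting multiplicities $\nu_{s,jl}$, a polynomial whose $z$-degree equals $d_z(P_2)\cdot d_w(P_1)$ when the leading coefficients do not degenerate, and one chooses generic projective coordinates (as in the setup preceding Proposition~\ref{P:noLines}) so that no degeneration occurs. One checks $d_{top}(F_{\IVar_{1,j}})$ is the generic $z$-fibre count of $P_1$, i.e.\ $d_z(P_1)$; $d_{top}(F_{\IVar_{2,l}}) = d_z(P_2)$; and $d_{top}(F_{\IVar_{2,l}\circ\IVar_{1,j}})$, being the generic $z$-fibre count of $V_R$ \emph{with the multiplicities} $\nu_{s,jl}$, is exactly the count, over a generic $z$, of pairs $(y,w)$ with $(z,y)\in\IVar_{1,j}$ and $(y,w)\in\IVar_{2,l}$, which splits as $d_z(P_1)$ choices of $y$ times $d_z(P_2)$ choices of $w$ per $y$.

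\textbf{Why the multiplicities are handled correctly.} The cleanest route avoids delicate resultant-degree computations altogether: fix a generic $w_0\in\pro$ and a generic $z_0\in\pro$, work in the affine chart $\CC$, and count the set $\Sigma := \{(z_0,y,w_0): (z_0,y)\in\IVar_{1,j},\ (y,w_0)\in\IVar_{2,l}\}$. For generic $z_0$ there are exactly $d_{top}(F_{\IVar_{1,j}})$ values $y_1,\dots,y_p$ with $(z_0,y_k)\in\IVar_{1,j}$, each simple; for generic $w_0$, and generic $z_0$ ensuring the $y_k$ avoid the critical locus, each $y_k$ lies in exactly $d_{top}(F_{\eff_{\IVar_{2,l}}})\cdots$ — no: I want, for generic $z_0$, the number of $w_0$ such that some $y_k$ connects $z_0$ to $w_0$; equivalently count over generic $z_0$ the cardinality of $\{w_0 : (z_0,w_0)\in V_R\}$ with multiplicity. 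By genericity each connecting triple is non-degenerate, so the generic fibre multiplicity $\nu$ over $(z_0,w_0)\in V_R$ counts connecting $y$'s, and summing $\nu$ over the generic $z_0$-fibre of $V_R$ gives $|\Sigma| = \sum_k \#\{w_0:(y_k,w_0)\in\IVar_{2,l}\} = p\cdot d_{top}(F_{\IVar_{2,l}})$, as the second factor is independent of the generic point $y_k$. This is precisely $d_{top}(F_{\IVar_{2,l}\circ\IVar_{1,j}}) = d_{top}(F_{\IVar_{1,j}})\,d_{top}(F_{\IVar_{2,l}})$; Proposition~\ref{P:noLines}$(ii)$ guarantees $V_R$ has no fibral components, so ``generic $z_0$-fibre of $V_R$'' is honestly finite and the counting is legitimate. \textbf{The main obstacle} will be the careful genericity argument showing that for a generic $z_0$ \emph{all} the relevant intermediate points $y_k$ simultaneously avoid every critical locus (of $\pi_2|_{\IVar_{1,j}}$, of $\pi_1|_{\IVar_{2,l}}$, and of $\pi_1|_{V_R}$), so that the composition multiplicities $\nu_{s,jl}$ are all equal to $1$ on the generic fibre and the naive product count is exact — this requires knowing these critical loci are proper subvarieties and invoking that a finite union of proper subvarieties, together with their images/preimages under the relevant finite projections, is again proper.
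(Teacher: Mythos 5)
Your proposal follows essentially the same route as the paper's proof: induction on $n$, reduction (via bilinearity of the composition rule over the decompositions \eqref{E:alt}) to a pair of irreducible constituents, and a generic count of two-step chains in which the multiplicities $\nu_{s,\,jl}$ absorb the over-counting of connecting points and Proposition~\ref{P:noLines}$(ii)$ guarantees the genericity/finiteness needed — this is precisely the paper's identity \eqref{E:invCount}, and your closing remarks about simultaneously avoiding all critical loci correspond to the paper's construction of the Zariski-open sets $V$, $W^2$ and $\OM_{jl}$. One caveat: in your final counting paragraph you fix a generic $z_0$ and count forward images (the $\pi_1$-fibres of $\IVar_{1,\,j}$ and of $V_R$), which computes $d_0$ rather than $d_{top}=d_1$, the generic $\pi_2$-fibre count per the paper's definition; since the argument is symmetric this is only a transposition slip — fix a generic $w_0$ and pull back through $\IVar_{2,\,l}$ and then $\IVar_{1,\,j}$ (or apply your count to $\eff$) — and your earlier identification $d_{top}(F_{\IVar_{1,\,j}})=d_z(P_1)$ already has the correct orientation.
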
 

\noindent{The result above is obvious if $F$ is a map. Essentially the same reasoning applies
when $F$ is not a map. In the definition of the topological degree, any
point $y\in \pro$ that satisfies equation \eqref{E:topo_d} is generic. Since the correspondence
$\corr$ (which determines $F$) has no irreducible components of the form
$\{a\}\times\pro$ or $\pro\times\{a\}$, $a\in \pro$, all the preimages of each such $y\in \pro$ --- with,
perhaps, the exception of finitely many points --- is generic in the sense of \eqref{E:topo_d}.
Hence, $d_{top}$ is multiplicative under composition of correspondences.}
\medskip

\section{The proof of Theorem~\ref{T:d_1more}}\label{S:proof-d_1more}

We begin this section with some remarks on our use of the results of Dinh--Sibony
\cite{dinhSibony:dvtma06}. The result we shall use is the one that leads to \eqref{E:asympFS}.
The precise result is \cite[Th{\'e}or{\`e}me~5.1]{dinhSibony:dvtma06}, read together with
Remarques~5.2.
We state a version below {\em specifically for correspondences on $\pro$.} We leave it to the reader to verify
our transcription of \cite[Th{\'e}or{\`e}me~5.1]{dinhSibony:dvtma06} to the present context. It might
be helpful for readers who are unfamiliar with \cite{dinhSibony:dvtma06} to mention a
couple of identities needed for this transcription. We shall not define here the notion of
{\em intermediate degrees of $F$ of order $s$}; we just refer to \cite[Section~3.1]{dinhSibony:dvtma06}. 

\begin{fact}\label{L:topDeg}
Let $(X,\omega)$ be a compact K{\"a}hler manifold of dimension $k$ and assume
$\int_X\omega^k = 1$. Let $F:X\lrarw X$ be a holomorphic correspondence. Let
$\lambda_s(F)$ denote the intermediate degree of $F$ of order $s$, $s=0, 1,\dots,k$.
Then:
\begin{itemize}
 \item[$i)$] $\lambda_{k-s}(\eff) = \lambda_s(F)$.
 \item[$ii)$] $\lambda_k(F) = d_{top}(F)$.
\end{itemize}
\end{fact}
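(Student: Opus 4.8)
The plan is to reduce both identities to a single observation: the intermediate degree of order $s$ of a meromorphic correspondence of $X$ is the integral, over the graph current, of the wedge of pull-backs of powers of $\omega$ by the two projections. Concretely, I would first recall from \cite[Section~3.1]{dinhSibony:dvtma06} that $\lambda_s(F)=\int_X F^*(\omega^s)\wedge\omega^{k-s}$; feeding in the defining formula $F^*(S)=(\pi_1)_*\!\left(\pi_2^*(S)\wedge[\corr]\right)$ together with the projection formula turns this into
\[
 \lambda_s(F) \ = \ \int_{[\corr]}\pi_1^*(\omega^{k-s})\wedge\pi_2^*(\omega^{s}), \qquad s = 0,1,\dots,k,
\]
and, applied to $\eff$, $\lambda_{k-s}(\eff)=\int_{[\acorr]}\pi_1^*(\omega^{s})\wedge\pi_2^*(\omega^{k-s})$. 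Everything else is bookkeeping on these two expressions.

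For part $(ii)$ I would set $s=k$, so that $\omega^{0}=1$ and
\[
 \lambda_k(F) \ = \ \int_{[\corr]}\pi_2^*(\omega^k) \ = \ \sum_{j=1}^N m_j\int_{\Gamma_j}\big(\left.\pi_2\right|_{\Gamma_j}\big)^*(\omega^k).
\]
Since $\corr$ determines a meromorphic correspondence, each $\left.\pi_2\right|_{\Gamma_j}$ is surjective, hence generically finite; writing $\delta_j$ for its number of sheets and using $\int_X\omega^k=1$, the $j$th integral equals $\delta_j$. Hence $\lambda_k(F)=\sum_j m_j\delta_j$, which is precisely the generic number of $F$-preimages of a point counted with multiplicity, i.e. $d_{top}(F)$.

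For part $(i)$ I would bring in the swap involution $\sigma:X\times X\to X\times X$, $\sigma(x,y)=(y,x)$. By the very definition of the adjoint chain one has $[\acorr]=\sigma_*[\corr]$, while $\pi_1\circ\sigma=\pi_2$ and $\pi_2\circ\sigma=\pi_1$. Changing variables by $\sigma$ in the formula above for $\lambda_{k-s}(\eff)$,
\[
 \lambda_{k-s}(\eff) \ = \ \int_{[\corr]}\sigma^*\!\left(\pi_1^*(\omega^{s})\wedge\pi_2^*(\omega^{k-s})\right) \ = \ \int_{[\corr]}\pi_2^*(\omega^{s})\wedge\pi_1^*(\omega^{k-s}).
\]
As $\pi_1^*(\omega^{k-s})$ has even total degree it commutes with $\pi_2^*(\omega^{s})$, so the right-hand side equals $\int_{[\corr]}\pi_1^*(\omega^{k-s})\wedge\pi_2^*(\omega^{s})=\lambda_s(F)$, which is $(i)$.

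There is no genuine obstacle here --- this is exactly the ``write the definitions out twice'' situation flagged before the statement. The two points that deserve a line of care are: checking that the Dinh--Sibony $\lambda_s$ is the $F^*$-version, so that $\lambda_k$ is the topological degree and not a dual quantity --- this is what the identity $F^*(S)=(\pi_1)_*(\pi_2^*S\wedge[\corr])$ plus the projection formula pins down; and keeping track of degrees when pulling forms back by $\sigma$, which is harmless since every form in sight has even degree.
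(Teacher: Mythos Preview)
Your proposal is correct and is precisely the ``write the definitions out twice'' computation the paper alludes to; the paper itself offers no detailed argument beyond that remark, so your unpacking via the projection formula for $(ii)$ and the swap involution $\sigma$ for $(i)$ is exactly what is intended.
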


In what follows $\FS$ shall denote the Fubini--Study form normalised so that $\int_{\pro}\FS = 1$.
The key result needed is:

\begin{result}[Th{\'e}or{\`e}me~5.1 of \cite{dinhSibony:dvtma06} paraphrased for
correspondences of $\pro$]\label{R:1stRes}
Let $F_n$, $n=1,2,3,\dots$, be holomorphic correspondences of $\pro$. Suppose that the series\linebreak 
$\sum_{n\in \zahl_+}(d_0(F_1)/d_1(F_1))\dots(d_0(F_n)/d_1(F_n))$ 
converges. Then, there exists a probability measure $\mu$ such that
\[
 d_1(F_1)^{-1}\!\!\dots d_1(F_n)^{-1}(F_n\circ\dots\circ F_1)^*(\FS) \weakST \mu \;\; \text{as measures, as $n\to \infty$.}
\]
\end{result}

\noindent{In fact, the convergence
statement \eqref{E:asympFS} follows from the above result by taking $F_n = F$ for $n = 1, 2, 3,\dots$}

\begin{proof}[The proof of Theorem~\ref{T:d_1more}]
We assume that $\norm(F)\neq \varnothing$; there is nothing to prove otherwise.
Let us fix a $z_0\in \norm(F)$. Then, any $\Pth\in \path_n(z_0)$ is admissible, and
by Property\,(2) in sub-section~\ref{SS:normality}, assuming that $n\geq 2$, we get
\begin{align}
 \pi^0_{j-1}\circ\nu_{(\Pth,\,j-1)}\circ\psi_{(\Pth,\,j-1)} \ &= 
 \pi^0_{j-1}\circ\prjc_{j}\circ\nu_{(\Pth,\,j)}\circ\psi_{(\Pth,\,j)} \notag \\
 &= \pi^0_j\circ\nu_{(\Pth,\,j)}\circ\psi_{(\Pth,\,j)} \;\;\; \forall j=2,\dots,n. \notag
\end{align}
Iterating this argument, we deduce the following:
\begin{align}
 \text{For any $n\geq 2$}, \ \Pth &\in \path_n(z_0) \ \text{(and admissible)} \notag \\
 &\Rightarrow \ \pi^0_j\circ\nu_{(\Pth,\,j)}\circ\psi_{(\Pth,\,j)} \ = \
 \pi^0_1\circ\nu_{(\Pth,1)}\circ\psi_{(\Pth,1)} \;\;\; \forall j=1,\dots,n. \label{E:zeroRel}
\end{align}
Let us now {\em fix} a disc $\Delta$ around $0\in \cplx$ such that $\Delta\Subset \dom$ (where $\dom$ is as
given by Definition~\ref{D:NSet}).
Define: 
\[
 K \ := \ \bigcap_{\Opth\in \path_1(z_0)}\pi^0_1\circ\nu_{(\Opth,\,1)}\circ\psi_{(\Opth,\,1)}(\Dsc).
\]
Clearly, there is a region $G\Subset \dom$, containing $0$, such that
\[
 \left(\nu_{(\Opth,\,1)}\circ\psi_{(\Opth,\,1)}\right)^{-1}
 \Big(\Big(\left.\pi^0_1\right|_{U(\Opth,\,1)}\Big)^{\raisebox{-1pt}{$\scriptstyle {-1}$}}\!(K)\Big)\,\Subset\,G
 \; \; \; \text{for every $\Opth\in \path_1(z_0)$},
\]
where $U(\Opth,1)$ is as described in
sub-section~\ref{SS:normality}. From the above, from \eqref{E:zeroRel}, and from the fact that any
$\Pth\in \path_n(z_0)$ has some $\Opth\in \path_1(z_0)$ as an initial $1$-path, we deduce:
\begin{equation}\label{E:coverRel}
 \text{for any $n\geq 1$ \&} \ \Pth\in \path_n(z_0), \;
 \left(\nu_{(\Pth,\,n)}\circ\psi_{(\Pth,\,n)}\right)^{-1}
 \Big(\Big(\left.\pi^0_n\right|_{U(\Pth,\,n)}
 \Big)^{\raisebox{-1pt}{$\scriptstyle {-1}$}}\!(K)\Big)\,\Subset G. 
\end{equation}

We can deduce from Definition~\ref{D:NSet} that $K\subset \norm(F)$.
As $K$ has non-empty interior, it suffices to show that for any {\em non-negative} function
$\varphi\in \smoo(\pro;\rea)$ with ${\rm supp}(\varphi)\subset K$, 
$\int_{\pro}\varphi\,d\mu_F=0$. Hence, let us pick some function $\varphi\in \smoo(\pro;\rea)$ as
described. For any path $\Pth\in \path_n(z_0)$, let us write
\[
 \Ptmap \ := \ \left(\pi^0_n\times\pi^n_n\right)\circ\nu_{(\Pth,\,n)}\circ\psi_{(\Pth,\,n)}.
\]
We adopt notation analogous to that developed in the beginning of sub-section~\ref{SS:normality}.
For any multi-index $\mul{\alpha}\in \interR{1}{L}^j$, $\mul{\alpha}=(\alpha_1,\dots,\alpha_j)$,
$j=1,2,3,\dots$, let us define
\[
 {\IVar}^{\mul{\alpha}} \ := \ \begin{cases}
 							\IVar_{\alpha_1}, &\text{if $j=1$,} \\
 							\IVar_{\alpha_j}\circ {\IVar}^{(\alpha_1,\dots,\alpha_{j-1})}, &
 							\text{if $j\geq 2$.}
 							\end{cases}
 \]
If we write $\Pth$ as $(z_0,\dots,z_n;\,\mul{\alpha})$, then
$\Ptmap(\dom)$ is a ${\IVar}^{\mul{\alpha}}$-open neighbourhood of the point 
$(z_0, z_n)$. Furthermore, by our constructions in sub-section~\ref{SS:normality},
$\pi_1^{-1}(K)\cap |\corr^{\circ n}|$ is covered by the sets $\Ptmap(\dom)$ as
$\Pth$ varies through $\path_n(z_0)$. Hence, by definition:
\begin{equation}\label{E:1stCalc}
 \langle (F^n)^*(\FS), \varphi \rangle \ =
 \sum_{\Pth \in \path_n(z_0)}\;\int\limits_{{\rm reg}(\Ptmap(\dom))}
 \negthickspace\big(\left.\pi_1\right|_{\Ptmap(\dom)}\big)^*(\varphi)
 \big(\left.\pi_2\right|_{\Ptmap(\dom)}\big)^*(\FS).
\end{equation}
It is routine to show that
$\left.\pi^0_n\times\pi^n_n\right|_{U(\Pth,\,n)}$ is a branched covering map onto
its image. As $\nu_{(\Pth,\,n)}$ maps $\psi_{(\Pth,\,n)}(\dom)$ homeomorphically onto
$U(\Pth,n)$, the topological degree of
$\left.(\pi^0_n\times\pi^n_n)\circ\nu_{(\Pth,\,n)}\right|_{\raisebox{2pt}{$\scriptstyle \psi_{(\Pth,n)}(\dom)$}}$
equals the topological degree of $\left.\pi^0_n\times\pi^n_n\right|_{U(\Pth,\,n)}$.
Let us denote this number by ${\rm deg}_1(\Pth)$. By the change-of-variables formula, we
get:
\[
 \langle (F^n)^*(\FS), \varphi \rangle \ 
 =\sum_{\Pth \in \path_n(z_0)}\;\frac{1}{{\rm deg}_1(\Pth)}\int\limits_{\psi_{(\Pth,n)}(\dom)}
				\negthickspace(\varphi
					\circ\pi^0_n\circ\nu_{(\Pth,\,n)})
					(\pi^n_n\circ\nu_{(\Pth,\,n)})^*(\FS).
\]
Since $\Ptmap(\dom)$, in general, has singularities, we discuss briefly what is meant above by
``change-of-variables formula''. Note that:
\begin{itemize}
 \item The magnitude of the form $(\pi^n_n\circ\nu_{(\Pth,\,n)})^*(\FS)$ stays bounded
 on punctured neighbourhoods of any singular point of $\Ptmap(\dom)$.
 \item ${\rm reg}(\Ptmap(\dom))$ after at most finitely many punctures is the image of
 a Zariski-open subset of $\psi_{(\Pth,\,n)}(\dom)$ under a ${\rm deg}_1(\Pth)$-to-1 covering map.
\end{itemize}
Given these facts, it is a standard calculation that the right-hand side of \eqref{E:1stCalc} transforms to
the last integral above. For each $\Pth \in \path_n(z_0)$, let us write
\[
 {\rm deg}_2(\Pth) \ := \ \text{the degree of the map $\psi_{(\Pth,\,n)}:\dom\lrarw 
 \nu_{(\Pth,\,n)}^{-1}(U(\Pth,n))$}.
\]
By the change-of-variables formula for branched coverings of finite degree, we get:
\begin{align}
 \langle &(F^n)^*(\FS),\varphi\rangle \notag \\
 &=\negthickspace
 \sum_{\Pth \in \path_n(z_0)}\;\frac{1}{{\rm deg}_2(\Pth){\rm deg}_1(\Pth)}\int\limits_{\dom}
					\negthickspace(\varphi
                                        \circ\pi^0_n\circ\nu_{(\Pth,\,n)}\circ\psi_{(\Pth,\,n)})  
                                        (\pi^n_n\circ\nu_{(\Pth,\,n)}\circ\psi_{(\Pth,\,n)})^*(\FS) \notag \\
&=\negthickspace\sum_{\Pth \in \path_n(z_0)}\;\frac{1}{{\rm deg}_2(\Pth){\rm deg}_1(\Pth)}
					\int\limits_{\overline{G}}\negthickspace(\varphi
                                        \circ\pi^0_n\circ\nu_{(\Pth,\,n)}\circ\psi_{(\Pth,\,n)})
                                        (\pi^n_n\circ\nu_{(\Pth,\,n)}\circ\psi_{(\Pth,\,n)})^*(\FS). \label{E:cpt_suppSimpl}
\end{align}
The last expression follows from \eqref{E:coverRel} and the fact that ${\rm supp}(\varphi)\subset K$.
\smallskip

Endow $\pro$ with homogeneous coordinates. Given the form of the argument made below, we can assume without
loss of generality that $\pi^n_n\circ\nu_{(\Pth,\,n)}\circ\psi_{(\Pth,\,n)}(\dom)$ does not contain {\em both}
$[0:1]$ and $[1:0]$.
Write
\[
 \pi^n_n\circ\nu_{(\Pth,\,n)}\circ\psi_{(\Pth,\,n)} \ = \ [X_{(\Pth,\,n)}:Y_{(\Pth,\,n)}],
\]
where $X_{(\Pth,\,n)},Y_{(\Pth,\,n)}\in \hol(\dom)$ and have no common zeros in $\dom$, and define
\[
 q_{(\Pth,\,n)} \ := \ \begin{cases}
					X_{(\Pth,\,n)}/Y_{(\Pth,\,n)}, &
						\text{if $[0:1]\in \pi^n_n\circ\nu_{(\Pth,\,n)}\circ\psi_{(\Pth,\,n)}(\dom)$}, \\
					Y_{(\Pth,\,n)}/X_{(\Pth,\,n)}, &
						\text{if $[1:0]\in \pi^n_n\circ\nu_{(\Pth,\,n)}\circ\psi_{(\Pth,\,n)}(\dom)$}.
					\end{cases}
\]
From the expression for the Fubini--Study metric in local coordinates and from \eqref{E:cpt_suppSimpl},
we have the estimate (where $C$ is a constant\,$\geq 1$ that is independent of $n$):
\[
 d_1(F)^{-n}|\langle (F^n)^*(\FS),\varphi \rangle| \
 \leq \ \frac{C}{d_1(F)^n}\negthickspace\sum_{\Pth \in \path_n(z_0)}\;\int\limits_{\overline{G}}
 \|\varphi\|_{\infty}\bigg(\frac{|q_{(\Pth,\,n)}^\prime(\zt)|}{1+|q_{(\Pth,\,n)}(\zt)|^2}\bigg)^2dA(\zt).
\]
Since, by hypothesis, $\Fml(z_0)$ is a normal family, it follows by Marty's normality criterion --- see, for instance,
Conway \cite[Chapter VII/\S{3}]{conway:f1cv73} ---  that the family
\[
 \left\{\frac{|q_{(\Pth,\,n)}^\prime|}{1+|q_{(\Pth,\,n)}|^2}:
 n\in \zahl_+ \ \text{and $\Pth\in \path_n(z_0)$}\right\}
\]
is locally uniformly bounded. As $G\Subset \dom$, there exists an $M>0$ such that
\[
 \frac{|q_{(\Pth,\,n)}^\prime(\zt)|}{1+|q_{(\Pth,\,n)}(\zt)|^2} \ \leq \ M \;\; \forall \zt\in \overline{G}, \
 n\in \zahl_+ \ \text{and $\forall\Pth\in \path_n(z_0)$}.
\]
Given $n\in \zahl_+$, the number of summands in
\eqref{E:cpt_suppSimpl} is $d_{top}(\,{}^\dagger\!(F^n)) = d_0(F)^n$. The equality
is a consequence of Proposition~\ref{P:degreeSeq} and Lemma~\ref{L:iterate}. From it and the last two
estimates, it follows that
\begin{equation}\label{E:finalDecay}
 d_1(F)^{-n}|\langle (F^n)^*(\FS),\varphi \rangle| \ \leq \ C\left[\frac{d_0(F)}{d_1(F)}\right]^n\!M^2
 {\sf Area}(\overline{G}) \;\lrarw\; 0 \ \text{as $n\to \infty$,}
\end{equation}
since, by hypothesis, $d_0(F)<d_1(F)$.
\smallskip
 
In view of Result~\ref{R:1stRes}, taking $F_n = F$ for $n=1, 2, 3,\dots$, we have
\[
 \shortInt{\pro}{\varphi}\,d\mu_F \ = \ \lim_{n\to\infty}\frac{1}{d_1(F)^n}\langle (F^n)^*(\FS),\varphi \rangle
 \ = 0.
\]
In view of our remarks earlier, the theorem follows.
\end{proof}

\begin{remark}\label{Rem:horrors}
The reader will observe that to keep a count of multiplicities in the above proof, it is essential that the
integrands that appear be labelled by $\Pth$ in $\path_n(z_0)$ and by $n$ itself: i.e., the length of the forward
iteration of $F$ under consideration. If we do {\bf not} make the restriction $(*)$ about the paths to be considered
in Definition~\ref{D:NSet}, then it can happen that, for a path
$(z_0,\dots,z_N; \mul{\alpha})$, there exists some $j$, $1\leq j\leq N$, for which the
set $U(\Pth,j)$, as defined in Section~\ref{S:keyDefns} is {\em not} irreducible when
viewed as a {\em germ} of an analytic variety at $(z_0,\dots,z_j)$. Clearly, the label $(\Pth,j)$ wouldn't
then suffice to index the various analytic irreducible components into which $U(\Pth,j)$ 
splits. In short, the assumption of admissibility is made so that the basic motivation for the normality set is not
obscured by too much book-keeping paraphernalia.
\end{remark}
\medskip

\section{The proof of Theorem~\ref{T:d_1less} and Corollary~\ref{C:d1_less}}\label{S:proof-d_1less}

As the discussion in Section~\ref{S:results} preceding the statement of Theorem~\ref{T:d_1less} suggests,
its proof relies on several notions introduced in \cite{mcgehee:acrcHs92}. We therefore begin this section with
a definition and a couple of results from \cite{mcgehee:acrcHs92}.

\begin{definition}\label{D:attrBlock} 
Let $X$ be a compact Hausdorff space and let $f\subset X\times X$ be a relation of $X$ to itself such
that $\pi_1(f)=X$. A set $B\subset X$ is called an {\em attractor block} for $f$ if
$f(\overline{B}) \subset B^\circ$.
\end{definition}

\noindent{We recall that, given a relation $f$ and a set $S\subset X$, $f(S)$ is as defined in
Section~\ref{S:results}.}

\begin{result}[McGehee, Theorem 7.2 of \cite{mcgehee:acrcHs92}]\label{R:mcG_block}
Let $X$ be a compact Hausdorff space and let $f\subset X\times X$ be a relation of $X$
to itself such that $\pi_1(f)=X$. Assume $f$ is a closed set. If $B$ is an attractor block
for $f$, then $B$ is a neighbourhood of $\omega(B;f)$.
\end{result}

\begin{result}[McGehee, Theorem 7.3 of \cite{mcgehee:acrcHs92}]\label{R:mcG_chooseBlock}
Let $X$ be a compact Hausdorff space and let $f\subset X\times X$ be a relation of $X$
to itself such that $\pi_1(f)=X$. Assume $f$ is a closed set. If $\mathcal{A}$ is an attractor
for $f$ and $V$ is a neighbourhood of $\mathcal{A}$, then there exists a closed attractor
block $B$ for $f$ such that $B\subset V$ and $\omega(B;f)=\mathcal{A}$.
\end{result}

\noindent{We clarify that, given two subsets $A$ and $B$ of some topological space, $B$ is
called a neighbourhood of $A$ here (as in \cite{mcgehee:acrcHs92}) if $A\subset B^\circ$.}
\smallskip

Before we can give the proof of Theorem~\ref{T:d_1less}, we need one more concept. For this
purpose, we shall adapt some of the notations developed in Section~\ref{SS:normality}. Here,
$F$ will denote a holomorphic correspondence on $\pro$.
Firstly: given $N\in \zahl_+$, we say that $(w_0,w_{-1}\dots,w_{-N};\,\alpha_1,\dots,\alpha_N)\in
(\pro)^{N+1}\times\interR{1}{L}^N$ (see \eqref{E:alt} for the meaning of $L$) is {\em a
path of a backward iteration of $F$ starting at $w_0$, of length $N$}, if
\[ 
 (w_{-j},w_{-j+1})\in \IVar_{\alpha_j}, \; \; j=1,\dots,N.
\] 
In analogy with the notation in Section~\ref{SS:normality}, we set:
\begin{multline}
 \path_{-N}(w_0) \ := \ \text{the set of all paths of {\em backward} iterations} \\
							\text{of $F$, of length $N$, starting at $w_0$.} \notag
\end{multline}
Next, we say that a point $w\in \pro$ is a {\em regular value} of $F$ if it is not a critical value
(recall that we have defined this in Section~\ref{S:results}). We can now make the following
definition:

\begin{definition}\label{D:invRegBr}
Let $F$ be a holomorphic correspondence on $\pro$, let $N\in \zahl_+$, and let
$w_0\in \pro$. Let $\Ipth:=(w_0,w_{-1},\dots,w_{-N};\,\alpha_1,\dots,\alpha_N)\in
 \path_{-N}(w_0)$. We call the list $(\eff_{(\Ipth,\,1)},\dots,\eff_{(\Ipth,\,N)})$
a {\em regular branch of a backward iteration of $F$ along $\Ipth$} if:
\begin{itemize}
 \item[1)] $w_0,w_{-1},\dots,w_{-N+1}$ are regular values.
 \item[2)] For each $j=1,\dots,N$, $\eff_{(\Ipth,\,j)}$ is a holomorphic function defined by
 \[
  \eff_{(\Ipth,\,j)}(\zt) \ := \ \pi_1\circ\big(\left.\pi_2\right|_{\bran(\Ipth,\,j)}\big)^{-1}(\zt) \; \;
  \forall \zt\in \pi_2(\bran(\Ipth,j)),
 \]
 where $\bran(\Ipth,j)$ is a {\em local} irreducible component of $\IVar_{\alpha_j}$ at
 the point $(w_{-j},w_{-j+1})$ such that: $(a)$ $\bran(\Ipth,j)$ is smooth;
 $(b)$ $\pi_2$ restricted to $\bran(\Ipth,j)$ is
 injective; and $(c)$ $\pi_2(\bran(\Ipth,j))\supset \pi_1(\bran(\Ipth,j-1))$ when $j\geq 2$.
\end{itemize}   
\end{definition}

\noindent{The above is a paraphrasing --- for the setting in which we are interested --- of
the notion of a ``regular inverse branch of $F$ of order $N$'' introduced by Dinh in 
\cite{dinh:dpppcp05}.}
\smallskip

The following is the key proposition needed to prove Theorem~\ref{T:d_1less}.
\smallskip

\begin{proposition}\label{P:key-d_1less}
Let $F$ be a holomorphic correspondence of $\pro$ having all the properties stated
in Theorem~\ref{T:d_1less} and let $\rep$ be a strong repeller that is disjoint from
the set of critical values of $F$. Then, there exists a closed set $B\subset \pro$ such that
$B^\circ\supset \rep$ and such that:
\begin{itemize}
 \item[$i)$] The operator $\PerrF_B:= d_1(F)^{-1}\left.\Lam\right|_B$, where $\left.\Lam\right|_B$
 is as defined in \eqref{E:pullbackDirac2} with $B$ replacing $\Omega$, 
maps $\smoo(B;\cplx)$ into itself.
 \item[$ii)$] There exists a probability measure $\mu_B\in \smoo(\pro;\rea)^{\boldsymbol{*}}$
 that satisfies $\mu_B\circ\PerrF_B = \mu_B$ and such that
 \begin{equation}\label{E:aux_conv}
  \lim_{n\to \infty} \sup_B\left|\PerrF^n[\varphi]-\shortInt{B}{\varphi}\,d\mu_B\right| \ =
  \ 0 \quad\forall \varphi\in \smoo(B;\cplx).
 \end{equation}
\end{itemize}
\end{proposition}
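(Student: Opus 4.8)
\emph{Proof proposal.} The plan is to manufacture $B$ from McGehee's attractor-block theory, observe that $\PerrF_B$ is then a well-defined positive Markov operator whose iterates average $\varphi$ over the regular backward branches of $F$, and finally feed the family $\{\PerrF_B^n\}$ into the ergodic criterion of \cite[\S2]{lyubich:epreRs83}; the technical core — adapted from \cite{lyubich:epreRs83} — is the equicontinuity of this family, which is where the repeller hypothesis does its work. First I would construct $B$. The critical values of $F$ form a proper closed subset $C\subset\pro$ which, by hypothesis, is disjoint from $\rep$; let $U_0\supset\rep$ be the open set provided by Definition~\ref{D:strongRep}, and fix an open $V$ with $\rep\subset V$, $\overline V\subset U_0$ and $\overline V\cap C=\varnothing$. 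Since $\rep$ is an attractor for the closed relation $|\acorr|$ (Definition~\ref{D:repeller}), Result~\ref{R:mcG_chooseBlock} yields a \emph{closed} attractor block $B\subset V$ for $|\acorr|$ with $\omega(B;|\acorr|)=\rep$. Being closed, $B$ satisfies $\eff(B)\subset B^\circ$, and Result~\ref{R:mcG_block} gives $\rep\subset B^\circ$. Thus $B$ is a closed neighbourhood of $\rep$, disjoint from $C$, with $\eff(B)\subset B$; iterating, every vertex of every backward path of $F$ starting in $B$ remains in $B$ and is therefore a regular value of $F$.

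Part (i) is then immediate. For $\varphi\in\smoo(B;\cplx)$ and $x\in B$, every point of $\pi_2^{-1}\{x\}\cap|\corr|$ projects under $\pi_1$ into $\eff(x)\subset B$, so $\left.\Lam\right|_B[\varphi](x)$ only involves values of $\varphi$ on $B$; and since $x\notin C$, near $x$ the preimages of $x$ under $F$ split into finitely many regular local branches (in the sense of Definition~\ref{D:invRegBr}), so $\left.\Lam\right|_B[\varphi]$ is locally a finite sum of $\varphi$ composed with holomorphic maps, hence continuous — i.e.\ $\PerrF_B$ maps $\smoo(B;\cplx)$ into itself. The absence of critical values in $B$ also forces $\pi_2^{-1}\{x\}\cap\Gamma_j$ to have exactly $\deg(\left.\pi_2\right|_{\Gamma_j})$ points for every $x\in B$, so the total weight is $\sum_j m_j\deg(\left.\pi_2\right|_{\Gamma_j})=d_1(F)$; hence $\PerrF_B$ is positive with $\PerrF_B[1]=1$ and operator norm $1$ on $\smoo(B;\cplx)$. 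A routine induction using the composition rule \eqref{E:compos2} shows $(\left.\Lam\right|_B)^n=\left.\Lam_{\corr^{\circ n}}\right|_B$, and Proposition~\ref{P:degreeSeq} then gives that the total weight is $d_{top}(F^n)=d_1(F)^n$; so for $z\in B$ and $n\in\zahl_+$,
\[
 \PerrF_B^n[\varphi](z) \ = \ \frac{1}{d_1(F)^n}\sum_{\Ipth}\,m_\Ipth\,\varphi\big(\eff_\Ipth(z)\big),
\]
the sum running over the regular backward branches $\Ipth\in\path_{-n}(F,z)$, with multiplicities $m_\Ipth\in\zahl_+$ satisfying $\sum_\Ipth m_\Ipth=d_1(F)^n$ and $\eff_\Ipth:=\eff_{(\Ipth,n)}\circ\cdots\circ\eff_{(\Ipth,1)}$ holomorphic near $z$ with image in $B$; thus each $\PerrF_B^n[\varphi]$ is a convex combination of the functions $\varphi\circ\eff_\Ipth$.

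For part (ii) I would verify the hypotheses of \cite[\S2]{lyubich:epreRs83} for $\{\PerrF_B^n\}$. \emph{Equicontinuity:} since $B$ is compact and disjoint from the closed set $C$, there is $r_0>0$ such that every regular local backward branch of $F$ at a point of $B$ extends holomorphically to the $r_0$-neighbourhood of that point; because $\rep$ is an attractor for $\eff$, these branches are, over $B$, uniformly contracting toward $\rep$, so the composites $\eff_\Ipth$ are all defined on one fixed neighbourhood of $z$ with image in $B$ and hence, by the Cauchy estimates, form an equicontinuous family; uniform continuity of $\varphi$ on $B$ then makes $\{\varphi\circ\eff_\Ipth\}$, and with it every convex combination, equicontinuous, so $\{\PerrF_B^n[\varphi]:n\ge1\}$ is equicontinuous — equivalently relatively compact in $\smoo(B;\cplx)$ — for each $\varphi$. \emph{Passage to the limit:} positivity and $\PerrF_B[1]=1$ make $n\mapsto\max_B\PerrF_B^n[\varphi]$ non-increasing and $n\mapsto\min_B\PerrF_B^n[\varphi]$ non-decreasing; relative compactness, together with the shrinking of the domains of the $\eff_\Ipth$ and the strong-repeller hypothesis (every point of $B\subset U_0$ has a backward orbit converging to the single point $a_0\in\rep$), forces every subsequential limit of $\PerrF_B^n[\varphi]$ to be a constant, so the two monotone sequences share a common limit $\ell(\varphi)$ and $\PerrF_B^n[\varphi]$ converges uniformly on $B$ to $\ell(\varphi)$. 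Finally $\ell$ is linear, positive, bounded, with $\ell(1)=1$; by the Riesz representation theorem $\ell(\varphi)=\shortInt{B}{\varphi}\,d\mu_B$ for a probability measure $\mu_B\in\smoo(\pro;\rea)^{\boldsymbol{*}}$, and $\ell(\PerrF_B[\varphi])=\lim_n\PerrF_B^{n+1}[\varphi]=\ell(\varphi)$ gives $\mu_B\circ\PerrF_B=\mu_B$ — which is \eqref{E:aux_conv}.

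The step I expect to be the main obstacle is the equicontinuity of $\{\PerrF_B^n[\varphi]\}$ — and, relatedly, the ``constant subsequential limit'' claim — since both rest on showing that the composed regular backward branches $\eff_\Ipth$ are defined on a common neighbourhood of each $z\in B$ and contract it: this is the precise way the attractor/repeller hypothesis enters, and is the input adapted from \cite{lyubich:epreRs83}.
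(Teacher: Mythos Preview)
Your overall strategy matches the paper's: construct $B$ via McGehee's attractor-block theory, establish equicontinuity of $\{\PerrF_B^n[\varphi]\}$, and then feed this into Lyubich's framework from \cite[\S2]{lyubich:epreRs83}. Part~(i) and the convex-combination formula for $\PerrF_B^n[\varphi]$ are handled correctly.

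The genuine gap is in your equicontinuity argument. You assert that ``because $\rep$ is an attractor for $\eff$, these branches are, over $B$, uniformly contracting toward $\rep$, so the composites $\eff_\Ipth$ are all defined on one fixed neighbourhood of $z$.'' But being an attractor for the \emph{relation} $|\acorr|$ gives no a~priori contraction of individual holomorphic branches; it only says $\eff(B)\subset B^\circ$ as sets. To iterate branches on a disc $D(z;r)$ around $z\in B$ you need the image of $D(z;r)$ under each branch to land again in a region where the next branch is defined and regular --- and this is not automatic from $\eff(B)\subset B^\circ$, since $D(z;r)\not\subset B$ in general and a single-step uniform-continuity bound does not iterate.

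The paper resolves this with a simple but essential device you are missing: it applies Result~\ref{R:mcG_chooseBlock} \emph{twice}, producing nested closed attractor blocks $\rep\subset B^\circ\subset B\subset W\Subset U$ for $|\acorr|$, with the outer block $\overline{W}$ also satisfying $\eff(\overline{W})\subset W$ and $W$ avoiding all critical values. For $z\in B$ one then chooses $D(z)\subset W$; since every single backward branch maps $W$ into $W$, every composite $\eff_\Ipth$ is defined on all of $D(z)$ with image in the bounded set $W\Subset\cplx$ (coordinates chosen so that $\infty\notin\overline{W}$). Montel's theorem --- not a contraction estimate --- now gives normality of $\{\eff_\Ipth\}$ on $D(z)$, and equicontinuity of $\{\PerrF_B^n[\varphi]\}$ follows exactly as you indicate via uniform continuity of $\varphi$ on $B$.

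For the passage to the limit, your direct max/min argument can be made to work but is incomplete as written: you still owe the reader the computation showing that any subsequential limit $\psi$ is constant. That computation is precisely the one the paper carries out (if $\psi(z_*)=\max_B\psi$, the averaging identity $\PerrF_B[\psi]=\psi$ forces $\psi\equiv\max_B\psi$ on every $\eff^n(z_*)$; the strong-repeller hypothesis supplies $a_n(z_*)\in\eff^n(z_*)$ with $a_n(z_*)\to a_0$, whence $\psi(a_0)=\max_B\psi$, and likewise for $\min$). The paper packages this as the spectral statement $\spU(\PerrF_B)=\{1\}$ with one-dimensional eigenspace $\cplx$, and then invokes an abstract operator result of Lyubich (Result~\ref{R:lyubich_Abs}); your route is equivalent in content but needs the same core calculation made explicit.
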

\begin{proof}
Let $a_0\in \rep$ and let $U$ be an open set containing $\rep$ such that:
\begin{itemize}
 \item For each $w\in U$, there is a sequence $\{a_n(w)\}_{n\in \zahl_+}$ such that
 $a_n(w)\in \eff^n(w)$ for each $n$, and $a_n(w)\lrarw a_0$ as $n\to \infty$.
 \item $U$ contains no critical values of $F$.
\end{itemize}
By Result~\ref{R:mcG_block} and Result~\ref{R:mcG_chooseBlock}, we can find an
open neighbourhood $W$ of $\rep$ such that $\overline{W}\subset U$ and 
$\overline{W}$ is a closed attractor block for the relation $|\acorr|$.
\smallskip

Repeating the last argument once more, we can find a closed attractor block, $B$, for
$|\acorr|$ such that
\[
 \rep\,\subset\,B^\circ\,\subset\,B\,\subset\,W\,\Subset\,U.
\] 
By the above chain of inclusions and by the definition of the term ``attractor block'',
it follows that the operator $\PerrF_B$ maps $\smoo(B;\cplx)$ into itself.
\smallskip

\noindent{{\bf Claim 1.} {\em For each fixed $\varphi\in \smoo(B;\cplx)$, 
$\{\PerrF_B^n[\varphi]\}_{n\in \zahl_+}$ is an equicontinuous family.}}

\noindent{It is easy to see that $\rep$ is a closed {\em proper} subset. We can thus make a useful observation:
\begin{itemize}
 \item[$(**)$] We can choose $W$ so that
 $\pro\setminus \overline{W}$ is non-empty. Hence, we can choose coordinates in such a
 way that we may view $W$ as lying in $\cplx$, and that $W\Subset \cplx$. We shall work with 
 respect to these coordinates in the remainder of this proof.
\end{itemize}
Let us pick a point $w_0$ in $B$ (which, by construction, is a regular value) and let $D(w_0)$ be a
small disc centered at $w_0$ such that $\overline{D(w_0)}\subset W$. Let us fix an $N>1$ and consider
a path $\Ipth\in \path_{-N}(w_0)$. Recall that, by construction:
\begin{equation}\label{E:squeeze}
 \eff(\overline{W}) \ \subset \ W.
\end{equation}}

We can infer from \eqref{E:squeeze} that there exists a regular branch
$(\eff_{(\Ipth,\,1)},\dots,\eff_{(\Ipth,\,N)})$
of a backward iteration of $F$ along $\Ipth$. To see why, first note that, as $w_0$ is a regular value
and  $\overline{D(w_0)}\subset W$, we get:
\begin{enumerate}
 \item[$(a_1)$] There is an open neighbourhood $U_0$ of $w_0$ containing only regular values.
 \item[$(b_1)$] Writing 
 \[
  \widetilde{U}_1\,:=\,\text{the connected component of $\pi_2^{-1}(U_0)\cap\IVar_{\alpha_1}$
  containing $(w_{-1},w_0)$},
 \]
 and defining $\bran(\Ipth,1):=$\,any one of the irreducible components of the
 (local) complex-analytic variety $\widetilde{U}_1$, the function
 \[
  \eff_{(\Ipth,\,1)} \ := \ \pi_1\circ\big(\left.\pi_2\right|_{\bran(\Ipth,\,1)}\big)^{-1} \; \;
  \text{is holomorphic on $U_0$}.
  \]
 \item[$(c_1)$] The open set $U_1:=\eff_{(\Ipth,\,1)}(U_0)\subset W$ and hence contains only regular values.
\end{enumerate}
The assertion $(c_1)$ follows from the fact that $U_1\subset \eff(\overline{W})\subset W$
and that the latter contains no critical values.
\smallskip

Let us now, for some $k\in \zahl_+$,  $k\leq N-1$, {\em assume} the truth of the statements
$(a_k)$, $(b_k)$ and $(c_k)$, which are obtained by replacing all the subscripts $0$ and $1$ 
in $(a_1)$, $(b_1)$ and $(c_1)$ ({\em except} the subscript in $\pi_1$) by $k-1$ and $k$, respectively.
Now, $(a_{k+1})$ follows from $(c_k)$. Defining $\bran(\Ipth,k+1)$ in exact analogy to
$\bran(\Ipth,1)$, and writing
\[
 \eff_{(\Ipth,\,k+1)} \ := \ \pi_1\circ\big(\left.\pi_2\right|_{\bran(\Ipth,\,k+1)}\big)^{-1},
\]
the holomorphicity of $(\left.\pi_2\right|_{\bran(\Ipth,\,k+1)})^{-1}$ follows from 
$(a_{k+1})$. Thus $(b_{k+1})$ holds true.
We get $(c_{k+1})$ by appealing once again to \eqref{E:squeeze} and using the
fact that $U_{k}\subset W$. By induction, therefore,  a regular branch of a backward iteration of
$F$ along $\Ipth$ exists.
\smallskip

Furthermore, we can conclude that:
\[
 \eff_{(\Ipth,\,N)}\circ\dots\circ\eff_{(\Ipth,\,1)}(D(w_0))\subset W\Subset \cplx \ \ \
 \text{(see $(**)$ above)}.
\]
Recall that $\Ipth$ was arbitrarily chosen from $\path_{-N}(w_0)$ and that the arguments in
the last two paragraphs hold true for any choice of $\bran(\Ipth,j)$, $1\leq j\leq N$, and for any 
$N\in \zahl_+$. Thus, by Montel's theorem, we infer the following important fact: the family
\begin{align}
 \trp{\Fml}(w_0) \ := \ 
 &\left\{\eff_{(\Ipth,\,N)}\circ\dots\circ\eff_{(\Ipth,\,1)}\in \hol(D(w_0)): \Ipth\in \path_{-N}(w_0) \;
 \text{\em for some $N\in \zahl_+$} \; \right. \notag \\
 &\left.\text{\em \& $(\eff_{(\Ipth,\,N)},\dots,\eff_{(\Ipth,\,1)})$ is a regular branch of a backward
 iteration of $F$}\right\} \notag \\
 &\text{is a normal family.} \label{E:imp_normF}
\end{align}

Pick a $\varphi\in \smoo(B;\cplx)$ and let $\eps > 0$. As $B$ is compact, there exists a number
$\delta(\eps) > 0$ such that:
\begin{equation}\label{E:phiEst}
 |z_1-z_2| < \delta(\eps) \ \Rightarrow \ |\varphi(z_1)-\varphi(z_2)| < \eps \;\;\; \forall z_1, z_2\in B.
\end{equation}
We pick a $\zt\in B$. By taking $w_0 = \zt$ in the discussion in the previous paragraph, we infer
from the normality of the family $\trp{\Fml}(\zt)$ that we can find a sufficiently small number
$r(\eps,\zt) > 0$ such that:
\begin{equation}\label{E:innerEst}
 |\xi-\zt| < r(\eps,\zt) \ \text{and $\xi\in B$} \ \Rightarrow \ |\psi(\xi)-\psi(\zt)| < \delta(\eps) \;\;\;
 \forall\psi\in \trp{\Fml}(\zt).
\end{equation}
Now, for each $\zt\in B$  write:
\[
 \inB(N,\zt) \ := \ \{(\psi,\Ipth): \psi \ \text{is some $\eff_{(\Ipth,\,N)}\circ\dots\circ\eff_{(\Ipth,\,1)}$
 in $\trp{\Fml}(\zt)$}, \ \Ipth\in \path_{-N}(\zt)\}.
\]
If $(\zt_0,\zt_{-1},\dots,\zt_{-N};\mul{\alpha}) =: \Ipth$ is a path of backward iteration
(with $\zt_0$ being the above $\zt$), basic intersection theory tells us that the local intersection
multiplicity of $\IVar_{\alpha_j}$ with $\pro\times\{\zt_{-j+1}\}$ at  $(\zt_{-j},\zt_{-j+1})$ equals the
number of distinct branches $\eff_{(\Ipth,j)}$ one can construct according to the above inductive prescription
(this number is greater than $1$ if  $\IVar_{\alpha_j}$ has a normal-crossing singularity at
$(\zt_{-j},\zt_{-j+1})$). From this, and from the iterative construction of the $\eff_{(\Ipth,\,N)}$'s above,
it follows that:
\begin{equation}\label{E:ibCount}
 {\sf Card}[\,\inB(N,\zt)\,] \ = \ d_1(F)^N \;\;\; \forall \zt\in B.
\end{equation}
From \eqref{E:phiEst}, \eqref{E:innerEst} and \eqref{E:ibCount}, we get:
\begin{align}
 |\PerrF_B^n[\varphi](\xi)-\PerrF_B^n[\varphi](\zt)| \ = \
 &\left|\sum_{(\psi,\Ipth)\in \inB(n,\zt)}\!\!\!d_1(F)^{-n}
 (\varphi\circ\psi(\zt)-\varphi\circ\psi(\xi))\right| \notag \\
 \leq \ &d_1(F)^{-n}\!\!\!\sum_{(\psi,\Ipth)\in \inB(n,\zt)}\!\!\!|\varphi\circ\psi(\zt)-\varphi\circ\psi(\xi)| \
 < \ \eps \notag \\
 &\forall\xi \in B \ \text{such that $|\xi-\zt| < r(\eps,\xi)$ and $\forall n\in \zahl_+$.} \notag
\end{align}
The above holds true for each $\zt\in B$. This establishes Claim 1.
\smallskip

In what follows, the term {\em unitary spectrum} of an operator on a complex Banach space will mean
the set of all eigenvalues of the operator of modulus $1$, which we will denote by $\spU$. Observe
that $\spU(\PerrF_B)\ni 1$.
\smallskip

\noindent{{\bf Claim 2:} {\em $\spU(\PerrF_B)=\{1\}$, and the eigenspace associated with $1$ is
$\cplx$.}}

\noindent{The ingredients for proving the above claim
are largely those of \cite[\S{4}]{lyubich:epreRs83}. However,
to make clear the role that the properties of $\rep$ play, we shall rework some of the details of Lyubich's
argument. Let us fix a $\lambda\in \spU(\PerrF_B)$ and let $\varphi_\lambda\in \smoo(B;\cplx)$ be an
associated eigenfunction. Let $\zt_*\in B$ be such that 
$|\varphi_\lambda(\zt_*)| = \max_B|\varphi_\lambda|$. By definition
\begin{equation}\label{E:eigVect}
 d_1(F)^{-1}\!\!\!\sum_{(\psi,\Ipth)\in \inB(1,\zt_*)}\!\!\!\varphi_\lambda\circ\psi(\zt_*) \ 
 = \ \lambda\varphi_{\lambda}(\zt_*).
\end{equation}
Since $|\varphi_\lambda(\zt_*)| = \max_B|\varphi_\lambda|$, and given \eqref{E:ibCount},
the above equality would fail unless $|\varphi_\lambda\circ\psi(\zt_*)| = |\varphi_\lambda(\zt_*)|$
for each $\psi$ occurring in \eqref{E:eigVect}. It is now obvious from this fact and from 
\eqref{E:eigVect} that $\varphi_\lambda(x) = \lambda\varphi_\lambda(\zt_*) \ \forall
x\in \eff(\zt_*)$. Iterating, we get
\begin{equation}\label{E:repeat}
 \varphi_\lambda(x) \ = \  \lambda^n\varphi_\lambda(\zt_*) \quad \forall x\in \eff^n(\zt_*), \;\;
 n=1, 2, 3,\dots
\end{equation}
Since, by construction, $B\subset U$, there exists an $x_n\in \eff^n(\zt_*), \ n=1, 2, 3,\dots$, 
such that $x_n\lrarw a_0$. Therefore, owing to  \eqref{E:repeat}, the sequence
$\{\lambda^n\varphi_\lambda(\zt_*)\}_{n\in \zahl_+}$ is a convergent sequence. As
$\varphi_\lambda \not\equiv 0$ (by definition), this implies that $\lambda=1$.}
\smallskip

Observe that, having determined that $\lambda = 1$, \eqref{E:repeat} also gives
\begin{equation}\label{E:value}
 \varphi_\lambda(\zt_*) \ = \ \varphi_\lambda(a_0).
\end{equation}

Note that $\overline{\PerrF[\varphi]} =  \PerrF[\overline{\varphi}] \ \forall\varphi\in \smoo(B;\cplx)$.
Hence, $\er\varphi_\lambda$ and $\mi\varphi_\lambda$ are also eigenvectors of
$\PerrF$ associated to $\lambda = 1$. Thus, we have the following analogue of \eqref{E:eigVect}:
\[
 d_1(F)^{-1}\!\!\!\sum_{(\psi,\Ipth)\in \inB(1,\zz)}\!\!\!\er\varphi_\lambda\circ\psi(\zz) \ 
 = \ \lambda\er\varphi_{\lambda}(\zz),
\]
where $\zz\in B$ stands for either a point of global maximum or a point of global minimum of
$\er\varphi_\lambda$. Using the above as a starting point instead of \eqref{E:eigVect} and repeating,
with appropriate modifications, the argument that begins with \eqref{E:eigVect} and ends at
\eqref{E:value}, we get:
\begin{equation}\label{E:real-part}
 {\min}_B(\er\varphi_\lambda) \ = \ \er\varphi_\lambda(a_0) \ = \ {\max}_B(\er\varphi_\lambda).
\end{equation}
Similarly, we deduce that:
\[
 {\min}_B(\mi\varphi_\lambda) \ = \ \mi\varphi_\lambda(a_0) \ = \ {\max}_B(\mi\varphi_\lambda).
\]
Combining the above with \eqref{E:real-part}, we conclude that, for any eigenvector 
$\varphi_\lambda$ associated to $\lambda = 1$, $\varphi_\lambda\equiv constant$.
This establishes Claim 2.
\smallskip

To complete this proof, we need the following:

\begin{result}[Lyubich, \cite{lyubich:epreRs83}]\label{R:lyubich_Abs}
Let $\mathscr{B}$ be a complex Banach space. Let $A: \mathscr{B}\lrarw \mathscr{B}$ be a linear
operator such that $\{A^n(v)\}_{n\in \zahl_+}$ is a relatively-compact subset of $\mathscr{B}$
for each $v\in \mathscr{B}$. Assume that $\spU(A) = \{1\}$ and that $1$ is a simple eigenvalue.
Let $h\neq 0$ be an invariant vector of $A$. Then, there exists a linear functional $\mu$
that satisfies $\mu\circ A = \mu$ and $\mu(h)=1$, and such that
\[
 A^n(v) \lrarw \mu(v)h \;\; \text{as $n\to \infty$}
\]
for each $v\in \mathscr{B}$.
\end{result}  

Take $\mathscr{B} = \smoo(B;\cplx)$ in the above theorem. Note that 
$|\PerrF_B^n[\varphi]|$ is bounded by $\max_B|\varphi|$ for $n=1, 2, 3,\dots$
Thus, in view of Claims 1 and 2, $\PerrF_B$ satisfies all the hypotheses of Result~\ref{R:lyubich_Abs}.
Hence (recall that the function that is identically $1$ on $B$ is an eigenvector of $\PerrF_B$)
there is a regular complex Borel measure $\mu_B$ on $B$ such that $\int_B 1\,d\mu_B = 1$, and
\[
  \lim_{n\to \infty} \sup_B\left|\PerrF^n[\varphi]-\shortInt{B}{\varphi}\,d\mu_B\right| \ =
  \ 0 \quad\forall \varphi\in \smoo(B;\cplx).
\]
It is clear from the above equation that $\mu_B$ is a positive measure. Hence it is a probability
measure on $B$.
\end{proof}

\begin{proof}[The proof of Theorem~\ref{T:d_1less}]
Let $B$ be any closed
set having the properties listed in the conclusion of
Proposition~\ref{P:key-d_1less}. Let $\mu_B$ be the probability measure associated
to this $B$. We claim that $\mu_F$ is given by defining:
\[
 \shortInt{\pro}{\varphi}\,d\mu_F \ := \ \shortInt{B}{\left.\varphi\right|_B}\,d\mu_B \quad
 \forall \varphi\in \smoo(\pro;\cplx).
\]
We must show that $\mu_F$ does not depend on the choice of $B$. The proof of this
is exactly as given in \cite[Theorem\,1]{lyubich:epreRs83}. We fix a point $z\in \rep$.
So, $z\in B$ for any choice of $B$. Thus,
\begin{align}
 \shortInt{\pro}{\varphi}\,d\mu_F \ &= \ \lim_{n\to \infty}\PerrF_B^n[\varphi](z)
 && (\text{by Proposition~\ref{P:key-d_1less}}) \notag \\
 &= \ \lim_{n\to \infty}d_1(F)^{-n}\Lam^n[\varphi](z), 
 && (\text{since $\eff^n(z)\subset \eff^n(\rep)\subset \rep$}) \notag
\end{align} 
where $\Lam$ is as described in the passage following \eqref{E:pullbackDirac2}. The last line is independent
of $B$. Hence the claim.
\smallskip

By the above calculation, we also get $\int_{\pro}1\,d\mu_F = 1$. Thus, $\mu_F$ is
a probability measure.
\smallskip

Let $U$ be the open set described at the beginning of the proof of Proposition~\ref{P:key-d_1less}.
We now define:
\begin{multline}
 \opColl(\rep) \ := \ \left\{B\subset U: B \ \text{is closed}, \ B^\circ\supset \rep, \ \eff(B)\subset B \ 
 \text{and there exists a closed}\right. \\
  \left.\text{attractor block $B_*$ for $|\acorr|$ s.t.} \ B\subset B_*^\circ\subset B_*\subset U\right\}. \notag
\end{multline}
We see from the proof of Proposition~\ref{P:key-d_1less} that, owing to our hypotheses,
$\opColl$ is non-empty. Hence
\[
 U(F,\rep) \ := \bigcup\nolimits_{B\in \opColl(\rep)}B^\circ
\]
is a non-empty open set that contains $\rep$. Let $z\in U(F,\rep)$. There exists a $B\in \opColl$
such that $z\in B^\circ$. A close look at the essential features of its proof reveals that this $B$ 
has all the properties listed in the conclusion of Proposition~\ref{P:key-d_1less}. Consider any
$\varphi\in \smoo(\pro;\cplx)$. We now apply Lemma~\ref{L:iterate} to get
\begin{align}
 d_1(F)^{-n}\langle (F^n)^*(\delta_z), \varphi\rangle \ = \
 &d_1(F)^{-n}\Lam^n[\varphi](z) && (\text{from \eqref{E:pullbackDirac3} and Lemma~\ref{L:iterate}})
 \notag \\
 = \ &\PerrF_B^n[\varphi](z) && (\text{since $\eff^n(z)\subset \eff^n(B^\circ)\subset B$}) \notag \\
 &\lrarw \ \shortInt{\pro}{\varphi}\,d\mu_F \;\; \text{as $n\to \infty$}, \label{E:quasi-eqD}
\end{align}
and this holds true {\em for any $z\in U(F,\rep)$}. The last line follows from our observations above on
$\mu_F$. Now note that, by construction, $\eff(z)\subset U(F,\rep)$ for each $z\in U(F,\rep)$. Therefore,
in view of equation \eqref{E:pullbackDirac2}, it follows from \eqref{E:quasi-eqD} that 
$F^*(\mu_F) = d_1(F)\mu_F$.
\end{proof}

We are now in a position to provide:

\begin{proof}[The proof of Corollary~\ref{C:d1_less}]
Recall that, by definition, for any Borel measure $\mu$, $F_*(\mu):= (\eff)^*(\mu)$. Thus,
the proof of Corollary~\ref{C:d1_less} involves, in each case, applying one of the results above to $\eff$.
\smallskip

The proof $(i)$ follows from Fact~\ref{L:clarify} applied to $\eff$.
\smallskip

In view of Definitions~\ref{D:repeller} and \ref{D:strongRep} and the hypothesis of
part~$(ii)$, $\eff$ satisfies the conditions of Theorem~\ref{T:d_1less} (i.e., with $\eff$
replacing $F$). Thus, $(ii)$ follows from Theorem~\ref{T:d_1less}.
\end{proof} 

\section{Examples}\label{S:example}

The purpose of this section is to provide concrete examples that illustrate some of our comments
in Sections~\ref{S:intro} and \ref{S:results}. We begin by 
showing that it is easy to construct holomorphic correspondences
on $\pro$ that satisfy all the conditions stated in Theorem~\ref{T:d_1less}, but, unlike the examples
discussed in Remark~\ref{Rem:care}, 
have ``large'' repellers. Next, we shall discuss one of the
classes of holomorphic correspondences studied by Bullett and collaborators. For each correspondence
$F$ in this class, $d_0(F) = d_1(F) = 2$, and we shall show that Theorem~\ref{T:d_1less} and
Corollary~\ref{C:d1_less}$(ii)$ are applicable to these examples.
\vskip1mm

\subsection{Holomorphic correspondences addressed by Theorem~\ref{T:d_1less} having large repellers}\label{SS:large}
Choose a complex polynomial $p$ with ${\rm deg}(p)\geq 2$ such that its Julia set
$\mathcal{J}_p\varsubsetneq \pro$ and such that no critical values of $p$ lie in $\mathcal{J}_p$.
It follows --- see, for instance, \cite[\S{4}]{lyubich:epreRs83} --- that there is
a compact set $B$ such that $B^\circ\supset \mathcal{J}_p$ and avoids the critical values of $p$,
and such that  $p^{-1}(B)\subset B^\circ$. Next, choose a polynomial
$Q$ with ${\rm deg}(Q)\geq 2$ and having the following properties:
\begin{itemize}
 \item[$a)$] $Q$ has an attractive fixed point, call it  $\zt_0$, in $\mathcal{J}_p$; 
 \item[$b)$] $B$ lies in the basin of attraction (under the action of $Q$) of $\zt_0$.
\end{itemize}
In view of $(b)$, we can find a positive integer $N$ that is so large that
\begin{itemize}
 \item $Q^N(B) \subset B^\circ$;
 \item ${\rm deg}(Q)^N\geq {\rm deg}(p)$.
\end{itemize}
Let us write $q := Q^N$.
\smallskip

Next, we define:
\begin{align}
 \Gamma_1 \ &:= \ 
 \{([z_0:z_1], [w_0:w_1]): w_0^{{\rm deg}(q)}z_1-w_0^{{\rm deg}(q)}z_0\,q(w_1/w_0) = 0\}, \notag \\
 \Gamma_2 \ &:= \
 \{([z_0:z_1], [w_0:w_1]): w_1z_0^{{\rm deg}(p)}-w_0z_0^{{\rm deg}(p)}p(z_1/z_0) = 0\}. \notag
\end{align}
The projective coordinates are so taken that $[0:1]$ stands for the point at infinity.
\smallskip

We set $\corr := \Gamma_1+\Gamma_2$ and let $F$ denote the correspondence determined
by $\corr$. Clearly $d_0(F) = {\rm deg}(q)+1\geq {\rm deg}(p) +1 = d_1(F)$. In this type of construction,
we will have $d_0(F) > d_1(F)$ in general. However, apart from satisfying the rather coarse properties
$(a)$ and $(b)$ above, $p$ and $Q$ can be chosen with considerable independence from each other. Thus,
this construction will also produce holomorphic correspondences $F$ that satisfy $d_0(F) = d_1(F)$.
\smallskip

By construction, $\eff(B) \subset B^\circ$. In other words, $B$ is an attractor block for the relation $|\acorr|$.
Therefore, it follows from Result~\ref{R:mcG_block} and Definition~\ref{D:repeller} that $F$ has a repeller
$\rep\subset B^\circ$; this repeller is just $\omega(B; |\acorr|)$. 
\smallskip

It is easy to see that
\[
 \bigcap_{n\geq 0}\overline{\bigcup_{k\geq n}\eff^k(B)} \ \subseteq \ \omega(B;|\acorr|)
\]
(the reader may look up \cite[Theorem 5.1]{mcgehee:acrcHs92} for a proof). The above
implies that
\[
 \mathcal{J}_p\,\subseteq\,\rep\,\subseteq\;B^\circ.
\]
Thus, the correspondence $F$ defined above has a repeller that is disjoint from the set
of critical values of $F$. Owing to $(a)$ and $(b)$ above, for each $w\in B^\circ$, there
exists a point $a_n(w)\in \eff^n(w)$ such that $a_n(w)\lrarw \zt_0$. Hence, $\rep$ is
a strong repeller. Unlike the examples in Remark~\ref{Rem:care},
$\rep$ is ``large'' in a certain sense.
\vskip1mm

\subsection{On the mating between a quadratic map and a Kleinian group}\label{SS:Kleinian}
The ideas developed in the last section are of relevance to the correspondences --- alluded
to in Section~\ref{S:intro} --- introduced by Bullett and his
collaborators. We shall examine one such class of correspondences.
We shall not elaborate here upon what precisely is meant by the mating between a quadratic map on
$\pro$ and a Kleinian group. The idea underlying this concept is simple, but a precise definition
requires some exposition. We will just state here (rather loosely)
that such a mating provides a holomorphic
correspondence $F$ on $\pro$, and partitions $\pro$ into an open set and two-component closed set
(denoted below by $\Lamb$) --- both totally invariant under $F$ --- such that the action of the iterates of $F$
on $(\pro\setminus\Lamb)$ resembles the action of the given Kleinian group on its regular set, and the
iterates of distinguished branches of $F$ and $\eff$ --- on the components of $\Lamb$,
respectively --- resemble the dynamics of the given quadratic map on its filled Julia set.
We refer the reader to the introduction of \cite{bullettPenrose:mqmmg94}
or to \cite[\S{3}]{bullettHarvey:mqmKgqs00}.
The example we present here is that of a holomorphic correspondence on $\pro$ that realises
a mating between certain faithful discrete representations $r$ in $PSL_2(\cplx)$ of
\[
 G \ := \ \text{the {\em free} product of $\zahl_2$ and $\zahl_3$},
\]
and a quadratic map $q_c : z\longmapsto z^2 + c$. In this discussion, $r$ and $c$ will be such
that:
\begin{itemize}
 \item $q_c$ is hyperbolic and its filled Julia set, $\filJ(q_c)$, is homeomorphic to a closed disc;
 \item the regular set of $r$, $\OM(r)$, is connected.
\end{itemize}

The fact that is pertinent to this discussion is that the mating of the above two objects is
realisable as a holomorphic correspondence $F$ on $\pro$. This is the main result of the
article \cite{bullettHarvey:mqmKgqs00} by Bullett and Harvey. For such an $F$,
$d_0(F) = d_1(F) = 2$. (In fact, \cite[Theorem~1]{bullettHarvey:mqmKgqs00} establishes
the latter fact for a much larger class of maps $q_c$. For simplicity, however, we shall limit
ourselves to the assumptions above.)
\smallskip

We shall show that the above example satisfies all the conditions of Theorem~\ref{T:d_1less}
and briefly indicate how the hypothesis of Corollary~\ref{C:d1_less}$(ii)$ applies to it as well. 
\smallskip

Let $\corr$ denote the holomorphic $1$-chain that determines the correspondence provided by
\cite[Theorem~1]{bullettHarvey:mqmKgqs00}. Let us list some of the features of $F$
that are relevant to the present discourse (we will have to assume here that readers are familiar
with \cite{bullettHarvey:mqmKgqs00}): 
\begin{itemize}
 \item[1)] There exists a closed subset $\Lamb\subset \pro$ that is totally invariant under
 $F$ and is the disjoint union of two copies $\Lamb_+$ and $\Lamb_-$ of a homeomorph
 of a closed disc.
 \item[2)] There exist two open neighbourhoods $U^-$ and $V^-$ of $\Lamb_-$ such that
  \begin{itemize}
   \item[$a)$] $\overline{V^-}\subset U^-$;
   \item[$b)$] $\pi_1^{-1}(U^-)\cap |\corr|$ is the union of the graphs of two functions
   $f_j^+\in \hol(U^-)$, $j=1, 2$;
   \item[$c)$] $f_1^+(V^-) = U^-$;
   \item[$d)$] There is a quasiconformal homeomorphism of $U^-$ onto an open
   neighbourhood $\omega^-$ of $\filJ(q_c)$ that carries $\Lamb_-$ onto $\filJ(q_c)$,
   $\bdy\Lamb_-$ onto the Julia set of $q_c$, is conformal in the interior
   of $\Lamb_-$, and conjugates $f_1^+$ to $\left.q_c\right|_{\omega^-}$.
   \item[$e)$] $f_2^+(U^-)\cap U^- = \varnothing$.
  \end{itemize}
 \item[3)] There exist two open neighbourhoods $U^+$ and $V^+$, $\overline{V^+}\subset U^+$,
 of $\Lamb_+$ and a pair of functions $f_1^-, f_2^-\in \hol(U^+)$ such that
 \[
  \pi_2^{-1}(U^+)\cap |\corr| \ = \ \{(f_1^-(\zt),\zt) : \zt\in U^+\}\cup 
 									  \{(f_2^-(\zt),\zt) : \zt\in U^+\},
 \]
 and the analogues of the properties $2(c)$--$2(e)$ obtained by swapping the ``$+$'' and
 the ``$-$'' superscripts hold true.
 \item[4)] $(f_2^+)^{-1} = f_2^-$ and $\overline{U^-}\cap\overline{U^+}
   =\varnothing$.
\end{itemize}
We must record that property $(2)$ is stated in \cite[\S{3}]{bullettHarvey:mqmKgqs00}
without some of the features stated above. However, it is evident from Sections~3 and 4
of \cite{bullettHarvey:mqmKgqs00} that (under the assumptions stated at the beginning of
this subsection) there is a hybrid equivalence, in the sense of Douady--Hubbard, between 
$\left.f_1^+\right|_{\Lamb_-}$ and $\left.q_c\right|_{\filJ(q_c)}$.   
The Julia set of $q_c$, $\mathcal{J}(q_c)$, is a strong repeller for $q_c$ in the sense of
Definition~\ref{D:strongRep}. Furthermore (see \cite[\S{4}]{lyubich:epreRs83}, for instance) there
exists a neighbourhood basis of $\mathcal{J}(q_c)$, $\mathcal{N}_1 = \{W_\alpha\}$, say, such that
\begin{equation}\label{E:nbdb1}
 W_\alpha\,\subset\,\omega^+\cap\omega^-, \; \; \text{and} \; \;
 q_c^{-1}(\overline{W}_\alpha)\,\subset\,W_\alpha \quad \forall W_\alpha\in \mathcal{N}_1.
\end{equation}
Also note that, from properties $2(a)$--$2(e)$, the generic number of pre-images of any
$w\in U^-$ under $F$ equals $2$. From this and \eqref{E:nbdb1}, we can deduce that  
there exists a closed set $B$ such that $\bdy\Lamb_-\subset B^\circ \subset B$, 
$\eff(B)\subset B^\circ$ and
\[
 \bdy\Lamb_- \ = \ \bigcap_{n\geq 0} \eff^n(B).
\]
Since $\eff(B)\subset B^\circ$, it is not hard to show (see \cite[Theorem 5.4]{mcgehee:acrcHs92}, for
instance, for a proof) that the right-hand side above equals $\omega(B; |\acorr|\,)$. Then, by
definition, we have:   
\begin{itemize}
 \item[I)] $\bdy\Lamb_-$ is a repeller for $F$.
\end{itemize}
Furthermore, as $\mathcal{J}(q_c)$ is a strong repeller for the map $q_c$, invoking property $2(d)$
(along with the fact that the generic number of pre-images of any $w\in U^-$ under $F$ equals $2$)
gives us:
\begin{itemize}
 \item[II)] $F$ has the property described in Definition~\ref{D:strongRep} with $\rep = \Lamb_-$ and
 for some annular neighbourhood $U$ of $\bdy\Lamb_-$, whence $\bdy\Lamb_-$ is a strong repeller.
\end{itemize}

Now, let $Q: U^-\lrarw \omega^-$ be the quasiconformal homeomorphism such that 
$f_1^+ = Q^{-1}\circ q_c\circ Q$. Assume $\bdy\Lamb_-$ contains a critical value of $f_1^+$. By
the construction described above, and by property $2(d)$, $\bdy\Lamb_-$ is totally invariant under
$f_1^+$. Hence, there exists a point 
$\zt_0\in \bdy\Lamb_-$ such that $(f_1^+)^\prime(\zt_0) = 0$. There is a small connected
open neighbourhood $G$ of $\zt_0$ such that $G\subset U^-$ and $G\setminus \{\zt_0\}$
contains two distinct pre-images under $f_1^+$ of each point in $U^-$
belonging to a sufficiently small deleted neighbourhood of $f_1^+(\zt_0)$.
As $q_c = Q\circ f_1^+\circ Q^{-1}$ and $Q$ is a homeomorphism, an analogous statement
holds true around $Q(\zt_0)\in \mathcal{J}(q_c)$. By the inverse function theorem, this is
impossible because $\mathcal{J}(q_c)$ contains no critical points of $q_c$. The last statement is
a consequence of hyperbolicity --- see, for instance \cite[Theorem~3.13]{mcmullen:cdr94}. Hence, our assumption
above must be false. Finally, by property $2(e)$, any critical values of $f_2^+$ lie away from
$\Lamb_-$. Thus,
\begin{itemize}
 \item[III)] $F$ has no critical values on $\bdy\Lamb_-$.
\end{itemize}
We see that the properties (I)--(III) above are precisely the conditions 
of Theorem~\ref{T:d_1less} with $\rep = \bdy\Lamb_-$.
\smallskip

We conclude our discussion of the present example by observing that, in view of property $(3)$ above
and an argument analogous to the one that begins with the equation \eqref{E:nbdb1}, with $F$ replacing
$\eff$ and $\Lamb_+$ replacing $\Lamb_-$, we can also show that
Corollary~\ref{C:d1_less}$(ii)$ applies to this example.
\medskip

\end{document}